\newtheorem{prop}{Proposition}[section]
\newtheorem{lem}{Lemma}[section]
\newtheorem{thm}{Theorem}[section]
\newtheorem{remark}{Remark}[section]
\numberwithin{equation}{section}
\newtheorem{rmq}{\textbf{Remark}}[section]
\newcommand{\thesectionwords}{\ifcase \thechapter \fi}
\newcommand{\be}{\begin{equation} \label}
\newcommand{\ee}{\end{equation}}
\newcommand{\R}{\mathbb{R}}
\newcommand{\N}{\mathbb{N}}
\newcommand{\eps}{\varepsilon}
\author{Loth}
\begin{document}
	\begin{center}
		\section*{  Asymptotic 
		blow-up behavior for the semilinear heat equation with   non scale invariant nonlinearity}
		$ $
	\end{center}
	
	\begin{center}
		  Loth Damagui CHABI\\
		$ $
		
	\end{center}
\begin{abstract}
 We characterize the asymptotic behavior near blowup points for positive solutions of the semilinear heat equation 
 \begin{equation*}
 \partial_t u-\Delta u =f(u),
 \end{equation*}
  for nonlinearities which are genuinely non scale invariant,
 unlike in the standard case $f(u)=u^p$.
 Indeed, our results apply to a large class of nonlinearities of the form 
	$f(u)=u^pL(u)$, where $p>1$ is Sobolev subcritical and  $L$ is a slowly varying function at infinity
	(which includes for instance logarithms and their powers and iterates, as well as some strongly oscillating functions).	
 
 More precisely, denoting by $\psi$ the unique positive solution of the corresponding ODE $y'(t)=f(y(t))$ which blows up at the same time $T$, we show that if $a\in\Omega$ is a blowup point of $u$, then  \begin{equation*}
 	\lim_{t\to T}\frac{u(a+y\sqrt{T-t},t)}{\psi(t)}= 1,\quad \text{uniformly for $y$ bounded.}
 	\end{equation*}
	 Additional blow-up properties are obtained, including the compactness of the blow-up set for the Cauchy problem 
	with decaying initial data.
\end{abstract}
{\bf Key words:} Semilinear heat equation,   
 asymptotic blowup behavior,  blow-up set, regular variation, weighted energy.
	\section{Introduction}
We consider the semilinear heat equation 
\begin{equation}
\begin{cases}
u_t-\Delta u=f(u),&x\in \Omega,\ t>0,\\
u=0,& x\in \partial\Omega,\ t>0,\\ 
u(x,0)=u_0(x),&x \in \Omega. 
\end{cases}\label{eqE2}
\end{equation}
 Throughout this article, $\Omega$ is a, possibly unbounded, uniformly smooth domain of $\mathbb{R}^n$ $(n\ge 1)$
and $f\in C^1([0,\infty))$  
satisfies $f(0)\ge 0$ and $f(s)>0$ for $s$ large.  
It is well known that, for $u_0\in L^\infty(\Omega)$, problem \eqref{eqE2} has a unique 
 nonnegative classical solution.  Throughout this article we will denote by $u$ this solution and by 
$T=T(u_0)\in(0,\infty]$ its maximal existence time.
If $f$ has superlinear growth in the sense that $1/f$ is integrable at infinity then
(see,~e.g.,~\cite[Section~17]{quittner2019superlinear}), under suitable largeness condition on the initial data, $u$ blows up in finite time, i.e $ T < \infty$ and
\begin{equation*}
\lim_{t\to T} ||u(t)||_\infty=\infty.
\end{equation*}
In this case, $T$ is called the blowup time of $u$. Given $a \in \overline{\Omega}$, we say that $a$ is a blowup point of $u$ if there exists $(a_j,t_j)\to(a,T)$ such that $|u(a_j,t_j)|\to\infty$ as $j\to \infty$.

The asymptotic behavior of blowup solutions for problem \eqref{eqE2} has been studied in great detail in the special case $f(u) = |u|^{p-1}u$,
especially in the Sobolev subcritical range $p\in (1,p_S)$ with 
$$ p_S=\begin{cases}
\frac{n+2}{n-2},&\hbox{ if $n\ge 3$} \\
\noalign{\vskip 1mm}
\infty,&\hbox{ if $n\le 2$}.
\end{cases}$$
In their fundamental work \cite{giga1985asymptotically,giga1989nondegeneracy}, Giga and Kohn have studied the local behavior of solutions near blow-up points for this range of $p$
and have discovered that, in backward self-similar parabolas, the solution behaves like the solution of the corresponding ODE, namely: \be{ell16}
\lim_{t\to T}(p-1)^{\frac{1}{p-1}}(T-t)^{\frac{1}{p-1}} u(a+y\sqrt{T-t},t)= 1. 
\ee
Later on, building on the result in \cite{giga1985asymptotically,giga1989nondegeneracy}, the sharp final blowup profiles and the corresponding refined space-time behaviors 
have been completely classified in the Sobolev subcritical range (cf.~~\cite{filippas1992refined, herrero1992blow, HV93, Vel92, Vel93b, BK94, 
	merle1998optimal, merle1998refined, souplet2019simplified}).
The complementary range $p\ge p_S$ has also been the subject 
of a number of investigations, but this range  
 exhibits more complicated
behaviors and is less understood (for instance other self-similar or non self-similar behaviors are possible; see, e.g., \cite[Section $25$]{quittner2019superlinear}  and the references therein for details).

On the other hand, the above mentioned analysis for the pure power nonlinearity in the Sobolev subcritical range heavily depends on the scale invariance 
properties of the equation, namely the fact that the equation is invariant by the transformation \begin{equation*}
u\mapsto \lambda^{\frac{2}{p-1}}u(\lambda x,\lambda^2 t),\quad  \lambda>0.
\end{equation*}
As already noted in \cite[Section $5.3$]{BB}, the precise asymptotic blow-up behavior for general nonlinearities is still a widely open problem.
The main goal of the present work and of the companion paper \cite{chso} is to partially fill this gap and to provide a precise description of the blow-up behavior
of solutions for a large class of non scale invariant nonlinearities.
The present paper is devoted to the local behavior of general solutions near arbitrary blow-up points,
whereas \cite{chso} will concentrate on radial decreasing solutions and describe the sharp final blowup profile and the refined space-time behavior
(note that \cite{chso} will make essential use of the results of the present paper).

As far as we know, the only previous study of local blow-up asymptotics for problem \eqref{eqE2} with a genuinely non-scale invariant nonlinearity \footnote{Nonlinearities with asymptotic scale invariance as $s\to\infty$
can be treated by similar methods as for the case $f(s)=s^p$, see~\cite[Section~6A]{giga1989nondegeneracy} and \cite{BK94}.} 
was recently carried out in \cite{duong2018construction} where, for the special case of the logarithmic nonlinearity $|u|^{p-1}u\log^q(2+u^2)$, 
the authors construct a special, single-point blow-up solution with a prescribed final and space-time blow-up profile.
A starting point in the approach of \cite{duong2018construction} is to rescale the problem by similarity variables and ODE renormalization, 
using as normalization factor the positive solution of the ODE $y'=f(y)$ blowing up at the same time $T$ (instead of $((p-1)(T-t))^{-1/(p-1)}$).
We will here use this idea with a different goal, and in a more systematic way as regards the nonlinearity.
Namely, for a large class of nonlinearities satisfying a suitable regular variation assumption at infinity, 
we will prove results  that can be seen as a counterpart of \cite{giga1985asymptotically,giga1989nondegeneracy},
describing the local behavior of any solution near an arbitrary blow-up point.
We will also show the compactness of the blow-up set in the case $\Omega=\R^n$  with decaying initial data.

\section{Main results}
\subsection{Statements of main results}
For $p\in\mathbb{R}$, we say that the function $f$ has regular variation at $\infty$  of index $p$ if the function $L$ defined by $L(s):=s^{-p}f(s)$ satisfies
\be{prop}
\lim_{\lambda\to\infty}\frac{L(\lambda s)}{L(\lambda)}=1\quad \text{for each }\ s>0.
\ee
A  function $L$ with the property \eqref{prop} is called a function with slow variation at $\infty$. 
 When $L$ is $C^1$ near infinity, a well-known sufficient condition for \eqref{prop} is $\lim_{s\to\infty} s\frac{L'(s)}{L(s)}=0$.
  We shall consider the following subclass of functions with regular variation, with index $p>1$:
\be{sem0}
f\in C^1([0,\infty)),\quad f(0)\ge 0, \quad\hbox{ { $f>0$} for large $s$} 
\ee
\be{sem}
{ L(s):=\frac{f(s)}{s^p}\hbox{ satisfies }  \frac{sL'(s)}{L(s)}=O\bigl(\log^{-\alpha}(s)\bigr)
\hbox{ as $s\to\infty$, for some $\alpha>\frac12$.}}
\ee
 Some examples of function $L$ with slow variation that satisfy \eqref{sem} are  given in Remark \ref{ex1} below  
 (see \cite{seneta} for a general reference  on regularly varying functions and, e.g,~\cite{bingham1989regular} and 
\cite{cirstea, souplet2022universal}). Throughout this paper, we shall denote  by $\psi(\cdot)$ the unique positive increasing solution of $y'=f(y)$ which blows up at $T<\infty$ (see at the end of this section for details). We also denote $\beta=\frac{1}{p-1}$ and $\kappa=\beta^\beta$.

\goodbreak 

\begin{thm}\label{RDT}
	Let $1<p< p_S$
	and $u_0\in L^\infty(\Omega)$ with $u_0\ge 0$.  
	 Assume \eqref{sem0}-\eqref{sem} and $T:=T_{\max} (u_0)<\infty$.
	If $a\in \Omega$ is a blow-up point of $u$, then  \be{jojo0}
	\lim_{t\to T}\frac{u(a+y\sqrt{T-t},t)}{\psi(t)}=  1,
	\ee
	uniformly on compact sets $|y|\le C$.
\end{thm}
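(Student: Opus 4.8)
The plan is to adapt the Giga--Kohn scheme of self-similar rescaling plus weighted energy, but using the ODE solution $\psi$ as renormalization factor instead of $(\beta(T-t))^{-\beta}$. First I would fix a blow-up point $a\in\Omega$, set $s=-\log(T-t)$ and introduce $w(y,s):=u(a+y\sqrt{T-t},t)/\psi(t)$. Writing $\mu(s):=(T-t)\psi'(t)/\psi(t)=(T-t)\psi^{p-1}(t)L(\psi(t))$, a direct computation turns \eqref{eqE2} into
\[
w_s-\Delta w+\tfrac12\,y\cdot\nabla w+\mu(s)\,w=\mu(s)\,\frac{L(\psi(t)w)}{L(\psi(t))}\,w^{\,p},
\]
on a domain which, since $a\in\Omega$, eventually contains $B_R\times(s_R,\infty)$ for every $R$. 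Using $\int_{\psi(t)}^{\infty}ds/f(s)=T-t$ together with \eqref{sem} and Potter-type bounds, I would establish $\log\psi(t)\sim\beta s$ and $\mu(s)\to\beta$ with $|\mu(s)-\beta|+|\mu'(s)|=O(\log^{-\alpha}\psi(t))=O(s^{-\alpha})$, which — and this is where the subclass condition enters — is square-integrable near $s=\infty$ precisely because $\alpha>\tfrac12$; moreover $L(\psi w)/L(\psi)\to1$ as $s\to\infty$, locally uniformly for $w\in(0,\infty)$, with global Potter bounds. Thus the formal limit equation is the autonomous one $W_s-\Delta W+\tfrac12 y\cdot\nabla W+\beta W=\beta W^{\,p}$.

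The next step is the a priori (Type I) bound $\sup_{s\ge s_0}\|w(\cdot,s)\|_{L^\infty(B_R)}\le C(R)<\infty$, equivalently $u(x,t)\le C\psi(t)$ near $(a,T)$. I would argue by contradiction: from a sequence of near-maximum points one rescales parabolically at the scale $\lambda=(N^{p-1}L(N))^{-1/2}$ adapted to the level $N\to\infty$; by slow variation of $L$ the rescaled nonlinearity converges to the pure power, so parabolic compactness produces a bounded nonnegative entire solution of $\partial_tW-\Delta W=W^{\,p}$ on $\R^n\times\R$, contradicting the Liouville-type theorem of Giga--Kohn (and Merle--Zaag) in the range $p<p_S$. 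Parabolic regularity then gives uniform local bounds on $w,\nabla w,w_s$, so $\{w(\cdot,\cdot+s)\}_{s\ge s_0}$ is precompact in $C^{2,1}_{\mathrm{loc}}(\R^n\times\R)$, and each element of its $\omega$-limit set extends to a bounded nonnegative eternal solution of the limit equation.

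With $\rho(y)=e^{-|y|^2/4}$ and $V(s,w)=\mu(s)\int_0^w \tfrac{L(\psi\sigma)}{L(\psi)}\sigma^p\,d\sigma$, I would then consider
\[
\mathcal E(s):=\int_{\R^n}\!\Bigl(\tfrac12|\nabla w|^2+\tfrac{\mu(s)}{2}w^2-V(s,w)\Bigr)\rho\,dy ,
\qquad
\frac{d}{ds}\mathcal E(s)=-\int_{\R^n} w_s^2\,\rho\,dy+R(s),
\]
where $R(s)$ gathers the contributions of the $s$-dependence of $\mu$ and of $L(\psi\,\cdot\,)$; using the $L^\infty$-bound and Potter bounds one gets $|R(s)|\le Cs^{-\alpha}$, hence $R\in L^2(s_0,\infty)$. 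Combining this with boundedness of $\mathcal E$ (again from the a priori estimate), the core of the argument is to deduce that $\mathcal E$ converges and $\int_{s_0}^\infty\!\int w_s^2\rho\,dy\,ds<\infty$. Then, by precompactness, every $\omega$-limit point of $w$ in $C^2_{\mathrm{loc}}(\R^n)$ is a bounded nonnegative steady state of the limit equation, so by the Giga--Kohn classification for $p<p_S$ it is $\equiv0$ or $\equiv1$; connectedness of the $\omega$-limit set forces $w(\cdot,s)\to W_*$ in $C^2_{\mathrm{loc}}$ with $W_*\in\{0,1\}$. Finally, to exclude $W_*=0$ I would use an analogue of the Giga--Kohn nondegeneracy result: if $w(\cdot,s)\to0$, then feeding this smallness into the equation and using parabolic smoothing (or a lower bound on $\mathcal E$ along blowing-up trajectories) shows $u$ is bounded near $(a,T)$, contradicting that $a$ is a blow-up point. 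Hence $W_*=1$, which is \eqref{jojo0}, and the $C^2_{\mathrm{loc}}$ convergence yields the uniformity on $|y|\le C$.

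The main obstacle I expect is exactly the weighted-energy step: turning the non-autonomous functional $\mathcal E$ into an almost-Lyapunov functional and controlling $R(s)$, which decays only like $s^{-\alpha}$ and is therefore not absolutely integrable when $\alpha\le 1$; extracting convergence of $\mathcal E$ and finiteness of $\int w_s^2\rho\,ds$ nonetheless is where the hypothesis $\alpha>\tfrac12$ must be exploited, via $L^2$-in-$s$ bounds on $\mu-\beta$, $\mu'$ and $\partial_s\!\big(L(\psi\,\cdot)/L(\psi)\big)$ together with integration by parts in $s$. A secondary difficulty is proving the Type I estimate for a genuinely non scale invariant $f$ and pinning down the correct Liouville-type input that makes the blow-up/rescaling argument close.
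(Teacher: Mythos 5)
Your overall architecture coincides with the paper's: similarity variables with ODE renormalization $w=u/\psi$, a weighted energy in the Gaussian measure, the Giga--Kohn classification of bounded steady states, and a nondegeneracy step to exclude the zero limit (the Type~I bound, which you propose to re-prove by rescaling plus a Liouville theorem, is in fact quoted in the paper from the universal estimates of \cite{souplet2022universal} for regularly varying $f$, so that ingredient is available off the shelf). The genuine gap is exactly at the point you flag as ``the main obstacle'', and as written your argument does not close it. If you keep the time-dependent potential $V(s,w)$ inside the energy, the error in $\frac{d}{ds}\mathcal E$ contains terms such as $\mu'(s)\int_{\R^n}\bigl(\int_0^w \frac{L(\psi\sigma)}{L(\psi)}\sigma^p\,d\sigma\bigr)\rho\,dy$, which are \emph{not} multiplied by $w_s$; knowing only $|R(s)|\le Cs^{-\alpha}$ with $R\in L^2\setminus L^1$ one cannot conclude that $\mathcal E$ converges or that $\int\!\!\int w_s^2\rho\,dy\,ds<\infty$ (consider $R(s)=s^{-3/4}$). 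The paper avoids this by a different decomposition: it rewrites the rescaled equation as the \emph{autonomous pure-power} equation plus a perturbation $H(s,y)$ satisfying $\|H(s,\cdot)\|_\infty\le Cs^{-\alpha}\log s$, keeps the autonomous energy $E[w]$ so that the only error term in $\frac{d}{ds}E$ is $\int w_s H\rho$, absorbs it by Cauchy--Schwarz into $\frac12\int w_s^2\rho+\frac12\int H^2\rho$ with $\int H^2\rho\lesssim s^{-2\alpha}\log^2 s$ integrable precisely because $\alpha>\frac12$, and then adds the explicit corrective term $C_1s^{-\gamma}$ with $\gamma=\alpha-\frac12$, whose derivative dominates $\frac12\int H^2\rho$. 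The resulting $G=E+C_1s^{-\gamma}$ is a genuine monotone nonnegative Lyapunov functional, and no integration by parts in $s$ is needed. Your proposed route might be salvageable, but it is not carried out, and this is where the proof actually lives.

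A secondary gap: excluding the limit $0$ at a blow-up point is not a soft consequence of ``parabolic smoothing''. It requires (i) a local lower bound on the blow-up rate --- a removable-singularity proposition asserting that $|u|\le\eps_0\psi(t)$ in a space-time neighbourhood of $(a,T)$, for $\eps_0$ small depending only on $n,f$, precludes blow-up at $a$; the paper proves this by a comparison argument with cut-offs and the auxiliary unknown $u^2\phi^2$, generalizing the pure-power case --- together with (ii) the continuity of $b\mapsto G[w_b](s_1)$ and an interpolation inequality bounding $|w_b(0,s)|$ by powers of $G[w_b](s_1)$, so that smallness of the energy at one time and one center propagates to the pointwise bound $|u(b,t)|\le\eps_0\psi(t)$ for all $b$ near $a$ and all later $t$. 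Your sketch names the right target but supplies neither ingredient.
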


 \begin{rmq}\label{ex1}
 	 \begin{itemize}
 		\item The conclusion of Theorem \ref{RDT}, with limit $\pm 1$, remains valid for sign changing solutions (i.e., without the assumption $u_0\ge 0$  and, for instance, extending $f$ as { an} odd function for $s<0$) provided the solution is of type I, namely:
		\be{dam1}
 		\|u(t)\|_\infty\le M\psi(t),\quad  T-\delta<t<T,
 		\ee
 		for some $M, \delta>0$.  See Theorem~\ref{progen} below. 
		
		 		\item For nonnegative initial data, 
				property \eqref{dam1} 
		follows from the recent result \cite[Theorem~3.1]{souplet2022universal},
		whenever $p\in(1,p_S)$ and $L$ has slow variation at $\infty$, hence in particular under the assumptions of Theorem \ref{RDT}
		(see Theorem~\ref{A1709241} in appendix below). 
		However, for sign-changing solutions and $p\in(1,p_S)$, property \eqref{dam1} so far is known only for $f(u)=|u|^{p-1}u$ 
		\cite{giga1985asymptotically,giga1989nondegeneracy} or for $f(u)=|u|^{p-1}u \log^q(2+u^2)$ \cite{hamza2022blow}.  
		The result also remains true
		for $p=p_S$ under assumption \eqref{dam1}
 		 (but \eqref{dam1}  is not true in general when $p=p_S$ (see \cite[Section $25$]{quittner2019superlinear}).

 	\item 	As examples of nonlinearities such that  assumptions \eqref{sem0}-\eqref{sem} are satisfied,
 	so that Theorem~\ref{RDT}  applies, we 
 	have $f(s)=s^pL(s)$ with $L$ given by:
 	$$
 	\left\{\begin{aligned}
 		&\hbox{\ $\star$\  $\log^a (K+s)$ for $K>1$ and $a\in\R$,} \\
 		\noalign{\vskip 1mm}
 		&\hbox{\ $\star$\  the iterated logarithms {$\log_m(K+s)$,\footnotemark}}  \\
 		\noalign{\vskip 1mm}
 		&\hbox{\ $\star$\  $\exp(|\log s|^\nu)$ with $\nu\in (0,1/2)$,} \\
 		\noalign{\vskip 1mm}
 		&\hbox{\ $\star$\  the strongly oscillating functions} \\
 		&\hbox{\qquad $\bigl[\log(3+s)\bigr]^{\sin[\log\log(3+s)]}
 			\quad\hbox{ and }\quad
 			\exp\bigl[|\log s|^\nu\cos(|\log s|^\gamma)\bigr],\ \ \nu, \gamma>0,\ \nu+\gamma<1/2$,} \\
 		\noalign{\vskip 1mm}
 		&\hbox{\ $\star$\  $1 +a \sin\bigl(\log^\nu(2+s)\bigr)$ with $ \nu\in (0,1/2)$ and $|a|<1$.}
 	\end{aligned}
 	\right.
 	$$
		 Our results thus cover a large class of non scale invariant nonlinearities. However, it so far remains an open problem what is the largest possible class of $f$ for which the conclusions of Theorem~\ref{RDT} hold.
		In particular the condition $\alpha>\frac{1}{2}$ in \eqref{sem} is required for the existence of the key energy functional 
		used in the proof (see~Lemma \ref{lem}).
 	\footnotetext{where $\log_m=\log\circ\dots\circ\log$ ($m$ times), $m\in\N^*$
 		and $K>[\exp\circ\dots\circ\exp](0)$.}
	
	\item  For radial decreasing solutions of problem \eqref{eqE2}, for a large class of regularly varying nonlinearities, the final blow-up profile and refined blow-up behavior are obtained in the companion paper \cite{chso}. Theorem \ref{RDT} of the present paper is used as an important tool in the proofs in \cite{chso}.
 \end{itemize}
 \end{rmq}

   Our next main result, for the case $\Omega=\R^n$, shows the compactness of the blowup set for initial data decaying at infinity. Note that this assumption is essentially optimal, in view of examples in \cite{GU} (see  also \cite[Remark $24.6$(ii)]{quittner2019superlinear}) of solutions blowing up at space infinity for nondecaying bounded initial data.
    To this end we recall the notation $C_0(\R^n)=\{\phi\in C(\R^n);\ \lim_{|x|\to\infty} \phi(x)=0\}$.

\begin{thm}\label{ell5}
	  Let $1<p<p_S$, assume \eqref{sem0}-\eqref{sem},  $f(0)=0$, and let  $u_0 \in C_0(\R^n)$ with $u_0\ge 0$  be such that $T<\infty$.
	Then the blow-up set of $u$ is compact. 
	  More precisely, there exists $R>0$ such that $$\underset{|x|>R,\, t\in (0,T)}{\sup} u<\infty.$$
\end{thm}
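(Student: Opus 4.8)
The plan is to prove the stronger quantitative statement $\sup_{|x|>R,\,t\in(0,T)}u<\infty$ for a suitable $R>0$; compactness of the blow-up set then follows at once, since no $x$ with $|x|>R$ can be a blow-up point, while the blow-up set is always closed (its complement is open, $u$ being locally bounded up to $t=T$ near any non-blow-up point). The idea is to rescale around points $x_0$ far from the origin: there $u_0$ is small, hence — because $f(0)=0$ — so is $u(\cdot,t_1)$ at a fixed intermediate time $t_1$, which makes the localized rescaled energy small, and the $\varepsilon$-regularity/non-degeneracy machinery (the non scale invariant counterpart of Giga--Kohn \cite{giga1985asymptotically,giga1989nondegeneracy}, built on the weighted energy of Lemma~\ref{lem} and the ODE renormalization by $\psi$) then forbids blow-up there and yields a uniform bound. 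The first step is to record that decay is preserved: since $f(0)=0$ and $u_0\in C_0(\R^n)$, the Duhamel formula and the invariance of $C_0(\R^n)$ under $e^{t\Delta}$ give $u(\cdot,t)\in C_0(\R^n)$ for $t\in[0,T)$ with $t\mapsto u(\cdot,t)$ continuous into $C_0(\R^n)$; combined with interior parabolic estimates, $\nabla u(\cdot,t)\in C_0(\R^n)$ as well, so that for each fixed $t_1\in(0,T)$, with $r_1:=\sqrt{T-t_1}$,
\[
\|u(\cdot,t_1)\|_{L^\infty(B(x_0,r))}+\|\nabla u(\cdot,t_1)\|_{L^\infty(B(x_0,r))}\longrightarrow0\qquad(|x_0|\to\infty)
\]
for every fixed $r>0$.

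Next one fixes $t_1\in(0,T)$, sets $s_1:=-\log(T-t_1)$, and for $x_0\in\R^n$ introduces the renormalized solution $w_{x_0}(y,s):=u(x_0+y\,e^{-s/2},\,T-e^{-s})/\psi(T-e^{-s})$, $s\ge s_1$, which solves a rescaled equation converging as $s\to\infty$ to the standard power-type equation. Using Step~1 together with the Gaussian weight $\rho(y)=e^{-|y|^2/4}$ entering the weighted energy $E$ of Lemma~\ref{lem}, one checks that $E[w_{x_0}(\cdot,s_1)]\to0$ as $|x_0|\to\infty$: on any ball $\{|y|\le M\}$, $w_{x_0}(\cdot,s_1)$ and $\nabla_y w_{x_0}(\cdot,s_1)$ are, up to the fixed factor $\psi(t_1)^{-1}$ and powers of $r_1$, controlled by $\|u(\cdot,t_1)\|_{W^{1,\infty}(B(x_0,Mr_1))}\to0$, while the Gaussian weight renders the tail $\{|y|>M\}$ uniformly small for $M$ large (using the global bound on $\|u(\cdot,t_1)\|_{W^{1,\infty}}$). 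Moreover the type I estimate \eqref{dam1}, valid here by Theorem~\ref{A1709241}, gives $\|w_{x_0}(\cdot,s)\|_\infty\le M_0$ uniformly in $x_0$ and $s\ge s_1$, which also keeps the non-autonomous correction terms in the energy under control (this is the role of $\alpha>\tfrac12$).

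One then invokes the quantitative non-degeneracy estimate: there exist $\varepsilon_0>0$ and $C_1>0$ (depending only on $n,p,f,t_1$) such that $E[w_{x_0}(\cdot,s_1)]<\varepsilon_0$ and $\|w_{x_0}(\cdot,s_1)\|_\infty\le M_0$ imply that $x_0$ is not a blow-up point and $\sup_{B(x_0,r_1/2)\times(t_1,T)}u\le C_1$. This is the analogue of \cite{giga1985asymptotically,giga1989nondegeneracy} for the present equation: smallness of the energy at $s_1$ forces $w_{x_0}(\cdot,s)\to0$ as $s\to\infty$ (the only bounded entire solutions of the limiting rescaled equation having energy $<\varepsilon_0$ being $\equiv0$, by the classification underlying Theorem~\ref{RDT}), and the decay rate, together with the near-$T$ asymptotics $\psi(t)\sim\kappa\,(T-t)^{-\beta}L(\psi(t))^{-\beta}$, is exactly fast enough to keep $\psi(t)\,w_{x_0}(\cdot,s)$ bounded. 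Granting this, by the previous paragraph there is $R>0$ with $E[w_{x_0}(\cdot,s_1)]<\varepsilon_0$ for all $|x_0|>R$; covering $\{|x|>R+r_1/2\}$ by balls $B(x_0,r_1/2)$ with $|x_0|>R$ yields $\sup_{|x|>R+r_1/2,\,t\in(t_1,T)}u\le C_1$, and since $u$ is bounded on $\R^n\times[0,t_1]$ we conclude $\sup_{|x|>R+r_1/2,\,t\in(0,T)}u<\infty$; hence the blow-up set lies in $\overline{B_{R+r_1/2}}$ and is compact.

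The main obstacle is the third step: extracting a \emph{uniform} $L^\infty$ bound — rather than merely the qualitative fact that $x_0$ is not a blow-up point — from smallness of the localized energy at a single time, in the non-autonomous regime. This relies on the weighted-energy functional of Lemma~\ref{lem} (hence on the hypothesis $\alpha>\tfrac12$), on the ODE renormalization by $\psi$, on the type I bound \eqref{dam1} to control the non scale invariant correction terms uniformly in $x_0$, and crucially on the precise asymptotic behavior of $\psi$ near $T$, which is what makes the renormalized decay rate compatible with the growth of $\psi$. The remaining steps are, by comparison, routine: the preservation of decay is standard once $f(0)=0$, the smallness of the localized energy is a soft consequence of it, and the passage from the local bounds to the global exterior bound is a covering argument combined with the finiteness of $\sup_{\R^n\times[0,t_1]}u$.
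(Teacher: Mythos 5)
Your proposal is correct and follows essentially the same route as the paper: propagation of the decay of $u_0$ to $u(\cdot,t_1)$ and $\nabla u(\cdot,t_1)$ via the heat semigroup, smallness of the Gaussian-weighted energy of the renormalized solution $w_{x_0}$ at a fixed shifted time for large $|x_0|$, and the quantitative nondegeneracy step (small energy $\Rightarrow |u|\le\varepsilon_0\psi$ near $x_0$ $\Rightarrow$ uniform boundedness via the local lower bound on the blow-up rate, Proposition~\ref{LDC}), followed by a covering argument. The one detail to adjust is that the Liapunov functional actually used is $G=E+C_1s^{-\gamma}$, so the reference time $s_1$ must in addition be chosen large enough that $C_1s_1^{-\gamma}$ lies below the smallness threshold — i.e.\ $t_1$ cannot be fixed arbitrarily in $(0,T)$ but must be taken close to $T$, which is harmless since your decay lemma holds for every $t_1$.
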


 As a consequence of Theorem \ref{RDT} we also get the following ``no-needle'' property for blow-up solutions.

\begin{prop} \label{no-needle}
	Under  the assumptions of Theorem \ref{RDT}, if  $a\in\Omega$ is a blowup point, then 
	\be{ell23}
	 \lim_{(x,t)\to(a,T)} u(x,t)=\infty. 
	\ee
\end{prop}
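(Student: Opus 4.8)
The plan is to deduce Proposition~\ref{no-needle} from the local blow-up asymptotics in Theorem~\ref{RDT}, combined with a parabolic covering/continuity argument. The point is that \eqref{jojo0} with $y=0$ already gives $u(a,t)/\psi(t)\to 1$, so $u(a,t)\to\infty$ along the vertical line; we must upgrade this to a genuine limit as $(x,t)\to(a,T)$ in a full space-time neighborhood, i.e.\ rule out that $u$ stays bounded along some sequence $(x_j,t_j)\to(a,T)$ with $x_j\neq a$.

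First I would fix an arbitrary large constant $K>0$ and use Theorem~\ref{RDT} to find $t_K<T$ such that $u(a+y\sqrt{T-t},t)\ge \tfrac12\psi(t)$ for all $|y|\le C$ and $t\in(t_K,T)$, where $C$ is a fixed radius to be chosen; equivalently $u(x,t)\ge\tfrac12\psi(t)$ whenever $|x-a|\le C\sqrt{T-t}$ and $t\in(t_K,T)$. Since $\psi(t)\to\infty$, this shows $u(x,t)\to\infty$ uniformly on the ``inner'' parabolic region $|x-a|\le C\sqrt{T-t}$. The remaining task is to control $u$ on the ``outer'' region: points $(x,t)$ with $|x-a|$ of order larger than $\sqrt{T-t}$ but still $(x,t)\to(a,T)$.

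For the outer region the natural tool is a comparison/continuity argument exploiting that $a$ is a blow-up point and $u\ge 0$. One clean way: suppose for contradiction that there is a sequence $(x_j,t_j)\to(a,T)$ with $u(x_j,t_j)\le M$ for some fixed $M$. Set $\rho_j=|x_j-a|$; by the inner estimate we may assume $\rho_j/\sqrt{T-t_j}\to\infty$. Rescale around $x_j$ at scale $\rho_j$ (or apply interior parabolic regularity/Harnack on the cylinder $B(x_j,\rho_j/2)\times(t_j-c\rho_j^2,t_j)$, which for large $j$ is contained in the region where we have good control) to show that $u$ then stays bounded on a definite parabolic neighborhood of $(x_j,t_j)$, in particular on a ball $B(x_j,\theta\rho_j)$ at time $t_j$ with $\theta>0$ fixed. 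Since $\rho_j\to 0$, these balls shrink to $a$; iterating (a chaining argument along a segment from $x_j$ toward $a$, using parabolic Harnack to propagate the bound inward) would force $u$ to remain bounded near $a$ on a full parabolic neighborhood, contradicting that $a$ is a blow-up point together with the inner lower bound $u\ge\tfrac12\psi(t)\to\infty$ on $|x-a|\le C\sqrt{T-t}$.

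The main obstacle is making the chaining/Harnack propagation from the outer region to the inner region rigorous and uniform: one must carefully track how many Harnack steps are needed to go from scale $\rho_j$ down to scale $\sqrt{T-t_j}$ and ensure the accumulated constants do not blow up, and one must handle the fact that $f$ is superlinear (so the equation is not the heat equation and naive linear Harnack must be replaced by a comparison with the ODE solution $\psi$, or by the type~I bound \eqref{dam1}/Theorem~\ref{A1709241} which controls $u$ from above by $M\psi(t)$). In practice the cleanest route may be: use the upper bound $u\le M\psi(t)$ (type~I) to linearize, apply a standard parabolic Harnack inequality for $u_t-\Delta u = c(x,t)u$ with $c=f(u)/u$ bounded on suitable cylinders, and thereby transfer the lower bound $u\gtrsim\psi(t)$ from the inner parabola outward to any fixed sequence approaching $(a,T)$; this directly yields \eqref{ell23}. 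I expect the bulk of the work to be in checking that the Harnack constant can be taken uniform as $t\to T$ after the self-similar rescaling, which is exactly the kind of estimate already available in the Giga--Kohn framework and its non-scale-invariant analogues used to prove Theorem~\ref{RDT}.
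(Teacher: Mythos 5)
Your first step (the uniform lower bound $u\ge\tfrac12\psi(t)$ on the inner parabolic region $|x-a|\le C\sqrt{T-t}$, coming from \eqref{jojo0}) is exactly how the paper starts. The gap is in the second step, the passage to the outer region, which you yourself flag as the main obstacle but do not resolve, and the specific mechanisms you propose have real problems. First, the coefficient $c=f(u)/u$ is \emph{not} bounded on the relevant cylinders: the type~I bound only gives $c\lesssim f(\psi(t))/\psi(t)\sim (T-t)^{-1}$, so on a cylinder of parabolic size $\rho_j\gg\sqrt{T-t_j}$ the rescaled zero-order coefficient is of size $\rho_j^2/(T-t_j)\to\infty$ and the Harnack constant degenerates; this is precisely the regime you need. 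Second, "transferring the lower bound outward in space" at a fixed time is not something parabolic Harnack does (it only propagates lower bounds forward in time), and the contradiction/chaining variant tries to propagate an upper bound at a single point inward, which again is not what Harnack provides without uniform control of $c$ from above. So as written the outer region is not handled.

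What actually closes the argument, and what the paper does, is much more elementary: propagate the inner bound \emph{forward in time} from a single fixed earlier time, using only the one-sided inequality $f(s)\ge -Cs$ for $s\ge0$ (which holds since $f\in C^1$, $f(0)\ge0$ and $f>0$ for large $s$), so that $u$ is a supersolution of the linear dissipative problem $v_t-\Delta v=-Cv$. Fix $t_0$ close to $T$, set $\delta=\sqrt{T-t_0}$, and compare $u$ on $B(a,\delta)\times(t_0,T)$ with $v(x,t)=\tfrac12\psi(t_0)\varphi\bigl((x-a)/\delta\bigr)e^{-Ct-\lambda_\delta (t-t_0)}$, where $\varphi$ is the first Dirichlet eigenfunction of $B_1$ and $\lambda_\delta=\lambda_1/\delta^2$; the inner bound at time $t_0$ dominates $v(\cdot,t_0)$ and $u\ge0=v$ on the lateral boundary. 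The whole point is that $\lambda_\delta(t-t_0)\le \lambda_1(T-t_0)/\delta^2=\lambda_1$ uniformly, so one loses only a \emph{fixed} multiplicative constant $\tfrac{\eta}{2}e^{-CT-\lambda_1}$ (with $\eta=\min_{\bar B_{1/2}}\varphi$), with no chaining and no accumulation of constants: $u(x,t)\ge \tfrac{\eta}{2}\psi(t_0)e^{-CT-\lambda_1}$ for $|x-a|\le\delta/2$ and $t_0\le t<T$. Letting $t_0\to T$ gives \eqref{ell23}, since any $(x,t)\to(a,T)$ eventually satisfies $t\ge t_0$ and $|x-a|\le\tfrac12\sqrt{T-t_0}$. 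This single comparison is the idea missing from your proposal.
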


 	\begin{rmq}
		  The analogues of Theorems~\ref{RDT} and \ref{ell5} are
		valid for sign changing solutions
		  (assuming for instance that $f$ is an 
		   odd function on $\R$), provided the solution is known to be of type~I;
		 see Theorems~\ref{progen} and \ref{ell50} below.
	\end{rmq}

Throughout this 
paper, we shall use the notation  
\be{F}
F(X):=\int_{X}^{\infty}\frac{ds}{f(s)}.
\ee
Note that, under assumptions \eqref{sem0}-\eqref{sem}  with $p>1$,  
there exists a large $A>0$ such that $F:[A,\infty)\to (0,F(A)]$ is  well defined and decreasing. 
Moreover,  $\psi$ is defined on some interval $(T-\eta,T)$ and  we  have  
\be{psiF}
\psi(t)=F^{-1}(T-t),\quad t\in (T-\eta,T).
\ee

The rest of the paper is organizezd as follows. 
In Subsection~\ref{ell55}, we present the main ideas of our proofs.
Sections~\ref{proof-thm1} and \ref{proof-thm2} are respectively devoted to the proof of Theorems \ref{RDT} and \ref{ell5},
and of their extension to sign-changing solutions. 
Finally in Appendix, based on \cite{souplet2022universal}, we provide the necessary type~I blowup estimate for nonnegative solutions.

\subsection{Ideas  of proofs}\label{ell55}

	 To prove Theorem \ref{RDT} (and its extension to possibly sign-changing solutions
	given in Theorem~\ref{progen} below),
	we shall adapt the methods of \cite{giga1989cha,giga1985asymptotically,giga1989nondegeneracy} to equations with
	genuinely non-scale invariant nonlinearities by taking advantage of the slow variation property of $L$ in an appropriate manner. 
	Since  we want to compare $u$ with  the
	solution of ODE $\psi$ near the blowup point $a$, we  first rescale the equation by 
	{\it similarity variables and ODE renormalization} 
	by setting $y=(x-a)/\sqrt{T-t}$, $s=-\log(T-t)$ and defining the rescaled function $w_a$  by  $u(x,t)=\psi(t)w_a(y,s)$.
	 Such a rescaling, which extends the seminal idea from \cite{giga1985asymptotically} in the case $f(u)=u^p$,
was applied in \cite{duong2018construction} in the particular case of a logarithmic nonlinearity in order
to construct special blow-up solutions with prescribed profile.
	The function $w_a$ is then a global solution of 
\be{rescwa}
	\partial_sw_a-\Delta w_a+\frac{1}{2}y\cdot\nabla w_a=e^{-s}\psi^{p-1}\Big(|w|^{p-1}wL(|w|\psi)-L(\psi)w\Big)
	\ee
	 (where we omit the variables without confusion) 
		in $\mathcal{W}_a:=\{(y,s) : s_0<s<\infty,\ y\in D(s)\}$, with $ D(s):= e^{s/2}(\Omega-a)$ and 
		 $s_0>0$ large. 
	Moreover, 
	$w_a$ and $|\nabla w_a|$ are bounded under  our assumptions.
	We thus want to show that $w_a$ is attracted by the set of equilibria of \eqref{rescwa},  which 
	turns out to be the same as in the pure power case, namely $\{0,-1,+1\}$.  
	 As a significant source of difficulty, some new, nonautonomous factors
	arise from the slowly varying part of the nonlinearity. To handle them, we rewrite \eqref{rescwa} 	as the pure power plus a perturbative term,  namely
	$$ 
	\partial_sw_a-\Delta w_a+\frac{1}{2}y\cdot\nabla w_a={ \beta|w|^{p-1}w-\beta w+H(s,y)}.$$ 
	By suitably exploiting the slow variation hypothesis, we can show that $H$ 
	converges to $0$  in $L^\infty$-norm as $s\to \infty$ and 
	 is globally square integrable in space-time with respect to the Gaussian measure. This enables us to construct a 
	weighted energy $G_a(s)$, which is a modified version of that in \cite{giga1985asymptotically} and  
	can be used as a Liapunov functional to show the desired convergence to a constant steady-state.

	The nondegeneracy of blow-up (i.e.~ruling out the case $w_a\to 0$) is then obtained by a similar argument 
	as in \cite{giga1989nondegeneracy}, based on weighted energy,
	but using our modified energy functional,
	and a removable singularity property, namely a local lower bound on the blow-up rate. 
	Under our assumptions on $f$, the latter (valid for all $p>1$) takes the following form:
	if $|u(x,t)|\le \varepsilon\psi(t)$ in some neighbourhood of $(a,T)$ with $\varepsilon>0$ sufficiently small, then $a$ is not a blow-up point.  This is proved by a comparison argument  extending that from \cite[Proposition 25.1]{quittner2019superlinear} for the pure power case.
	 
	 As for Theorem \ref{ell5}, it is obtained by combining the above arguments 
	with the fact that,
	 under the assumption $u_0 \in C_0(\R^n)$, 
the weighted energy $G_a(s)$ can be shown to decay as $|a|\to\infty$.

\section{Extension and proof of Theorem \ref{RDT}} \label{proof-thm1}

 As mentioned before,  we have the following extension of Theorem \ref{RDT} for possibly sign-changing solutions
 under a type I blowup assumption.

	\begin{thm}\label{progen}
	  Let $1<p\le p_S$.  Assume that
 $f\in C^1(\R)$ is an odd function, with { $f>0$} for large $s>0$, 
and that $f$ satisfies \eqref{sem}.
		Let $u_0 \in L^\infty(\Omega)$ 
		satisfy $T:=T_{\max} (u_0)<\infty$ and \eqref{dam1}.
		If  $a\in \Omega$ is a blow-up point of $u$, then  
		\be{jojo}
		\lim_{t\to T}\frac{u(a+y\sqrt{T-t},t)}{\psi(t)}=\pm 1,
		\ee
		uniformly on compact sets $|y|\le C$.
\end{thm}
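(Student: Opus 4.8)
The plan is to follow the similarity-variables plus ODE-renormalization strategy sketched in Subsection~\ref{ell55} and to run the weighted-energy machinery of Giga--Kohn in the perturbed setting. First I would fix a blow-up point $a\in\Omega$, set $y=(x-a)/\sqrt{T-t}$, $s=-\log(T-t)$, and define $w_a$ by $u(x,t)=\psi(t)w_a(y,s)$, so that $w_a$ solves \eqref{rescwa} on $\mathcal{W}_a$. Using the type~I assumption \eqref{dam1} together with parabolic regularity (interior estimates for the rescaled equation), I would establish that $w_a$ and $|\nabla w_a|$ are globally bounded on $\mathcal W_a\cap\{s\ge s_1\}$. The crucial analytic input is to rewrite the right-hand side of \eqref{rescwa} as $\beta|w_a|^{p-1}w_a-\beta w_a+H(s,y)$ and to show, exploiting \eqref{sem} (the $\log^{-\alpha}$ decay of $sL'(s)/L(s)$ with $\alpha>\tfrac12$), that $\|H(s,\cdot)\|_\infty\to 0$ as $s\to\infty$ and, more importantly, that $\int_{s_1}^\infty\int_{D(s)} H^2\,\rho\,dy\,ds<\infty$, where $\rho(y)=e^{-|y|^2/4}$ is the Gaussian weight. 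The point is that the fluctuation of $\log\psi$ over a unit $s$-interval is controlled by $\psi^{p-1}(t)\,sL'(s)/L(s)\big|_{s=\psi}$-type quantities, and the $L^2_s$ (not merely $L^\infty_s$) integrability is exactly what the condition $\alpha>\tfrac12$ buys — this is why the borderline exponent $\tfrac12$ appears.

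Next I would introduce the modified weighted energy $G_a(s)=\int_{D(s)}\big(\tfrac12|\nabla w_a|^2-E(w_a)\big)\rho\,dy$, where $E$ is the pure-power potential with $E'(w)=\beta|w|^{p-1}w-\beta w$, possibly corrected by an explicit $s$-dependent term to absorb the worst part of $H$. Differentiating along the flow gives $\frac{d}{ds}G_a(s)\le -\tfrac12\int_{D(s)}(\partial_s w_a)^2\rho\,dy + (\text{error involving }H)$, and the space-time $L^2$ bound on $H$ together with Cauchy--Schwarz makes the error term integrable in $s$; one also needs the boundary terms on $\partial D(s)$ to vanish in the limit, which uses the uniform smoothness of $\Omega$ and that the rescaled boundary recedes to infinity. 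Consequently $G_a(s)$ has a limit as $s\to\infty$ and $\int_{s_1}^\infty\int(\partial_s w_a)^2\rho\,dy\,ds<\infty$. A standard $\omega$-limit argument (using the bounds on $w_a,\nabla w_a$ and interior parabolic compactness) then shows every limit point of $\{w_a(\cdot,s)\}$ along $s\to\infty$ is a bounded stationary solution of the limiting equation $-\Delta W+\tfrac12 y\cdot\nabla W=\beta|W|^{p-1}W-\beta W$ on $\R^n$ with finite weighted energy; by the Giga--Kohn Liouville-type classification (valid for $1<p\le p_S$) the only such solutions are $W\equiv 0,\pm1$. Connectedness of the $\omega$-limit set forces a single constant, so $w_a(\cdot,s)\to c$ with $c\in\{0,\pm1\}$ uniformly on compact $y$-sets.

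It remains to exclude $c=0$, i.e.\ the nondegeneracy of blow-up, and this is the step I expect to be the main obstacle. The plan is to argue as in \cite{giga1989nondegeneracy}: suppose $w_a(\cdot,s)\to0$; using the energy identity one shows $G_a(s)\to0$, and then a bootstrap on the linearized (Gaussian-weighted) operator shows $\|w_a(\cdot,s)\|$ decays faster than any exponential, yielding $|u(x,t)|\le\varepsilon\psi(t)$ in a full space-time neighbourhood of $(a,T)$ for arbitrarily small $\varepsilon$. This contradicts the removable-singularity (local lower blow-up rate) lemma, which states that if $|u|\le\varepsilon\psi(t)$ near $(a,T)$ with $\varepsilon$ small enough then $a$ is not a blow-up point; that lemma is proved separately by a comparison argument adapting \cite[Proposition~25.1]{quittner2019superlinear}, building a supersolution out of $\psi$ and a spatial cutoff and again invoking the slow variation of $L$ to control the nonlinear term. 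Hence $c=\pm1$, and combined with the uniform convergence on compact $y$-sets this is exactly \eqref{jojo}. The delicate points throughout are: (i) verifying the space-time $L^2$ Gaussian bound on $H$ under \eqref{sem} alone, (ii) handling the moving boundary $\partial D(s)$ in the energy computation, and (iii) making the faster-than-exponential decay argument work with the nonautonomous perturbation present, since $H$ does not vanish identically even in the regime $w_a\approx0$.
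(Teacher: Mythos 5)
Your overall architecture matches the paper for the convergence part: similarity variables with ODE renormalization, the decomposition of the right-hand side as $\beta|w|^{p-1}w-\beta w+H$, the observation that \eqref{sem} with $\alpha>\tfrac12$ yields square integrability of $H$ in space-time against the Gaussian weight (the paper proves the stronger pointwise bound $\|H(s,\cdot)\|_\infty\le Cs^{-\alpha}\log s$ and adds the explicit corrector $C_1s^{-\gamma}$, $\gamma=\alpha-\tfrac12$, to make $G$ genuinely monotone, which you anticipate), the gradient bound needed for the energy, the $\omega$-limit argument with the Giga--Kohn classification $\mathcal S=\{0,\pm1\}$, and the removable-singularity lemma proved by comparison. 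Up to that point your plan is essentially the paper's proof.

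The nondegeneracy step is where you diverge, and where your route has a genuine gap. You propose to show that if $w_a\to0$ then a bootstrap on the linearized Gaussian-weighted operator gives faster-than-any-exponential decay of $w_a$. Two problems. First, the perturbation is \emph{not} superlinear near $w=0$: from \eqref{VraieEE} one only gets $|H(s,y)|\le C|w|$ (with a gain $Cs^{-\alpha}$ only because $|w|\le s^{-\alpha}$ on the relevant set), i.e.\ $H$ is of the same order as the linear term $-\beta w$; so the spectral-gap/linearization bootstrap does not obviously close in the nonautonomous setting --- this is exactly the obstacle you flag as (iii), and it is not resolved in your plan. Second, even granting rapid decay of $\|w_a(\cdot,s)\|_{L^2_\rho}$, this only controls $u$ on backward parabolas centred at $a$, i.e.\ on a \emph{shrinking} spatial neighbourhood; the removable-singularity lemma requires $|u|\le\eps_0\psi(t)$ on a fixed cylinder $B(a,\delta)\times(T-\delta^2,T)$, so you must recentre at nearby points $b$ and obtain uniformity in $b$. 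The paper avoids both issues with a much softer argument: if $0\in\omega(w_a)$ then $G[w_a](s_1)<\eps_1$ for some $s_1$; by continuity of $b\mapsto G[w_b](s_1)$ the same holds for all $b$ near $a$; monotonicity of $G$, the bound $\int_{D(s)}w_b^2\rho\le C\,G[w_b](s)^{2/(p+1)}$ and the interpolation inequality $|v(0)|\le C[\|v\|_{L^2(B_1)}^\theta\|\nabla v\|_{L^\infty}^{1-\theta}+\|v\|_{L^2(B_1)}]$ then give $|w_b(0,s)|\le\eps_0$ for all $s\ge s_1$ and all such $b$, i.e.\ $|u(b,t)|\le\eps_0\psi(t)$ on a full neighbourhood, and Proposition~\ref{LDC} concludes. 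You should replace your bootstrap by this energy-continuity-in-the-centre argument (or supply the missing linearized analysis together with the recentring step).
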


	\begin{remark}
  The oddness assumption is made here only for simplicity. Similar conclusions could be obtained under more general hypotheses.
 	\end{remark}
	
 The proof of Theorem~\ref{progen} is divided into several subsections for clarity.  
In  subsection \ref{s}, we prove a local lower  bound  
on the blow-up rate. In subsection \ref{sss}, we give an upper estimate of  $|\nabla u|$
which is  needed for the existence of the key  
weighted energy functional and its properties. 
 The latter are derived in subsection \ref{ell17}.
With the help of these tools, we then conclude
the proof of Theorem~\ref{progen} by showing 
 the convergence and the nondegeneracy of blow-up in  subsections \ref{ssss} and \ref{sssss} respectively.

\subsection{Local lower bound on the blow-up rate}\label{s}

\begin{prop}\label{LDC}
Let $f\in C^1(\R)$ be odd. Assume that there exist $q, z_0>1$ such that 
	\be{hypgq}
	\hbox{$f(z)>0$ and $g(z):=z^{-q}f(z)$ is { non-decreasing} for $z>z_0$.}
		\ee
		Let $T, \delta>0$, $a\in\mathbb{R}^n$  
		and set  
	$\mathcal{Q}:=B(a,\delta)\times(T-\delta^2,T)$. 
	 There exist $\eps_0,\delta_0>0$ depending only on $n,f$ such that if $\delta\in(0,\delta_0]$ and $u$ is a classical solution of 
	\be{L3}
	u_t-\Delta u=f(u),\quad (x,t)\in \mathcal{Q},
	\ee
	and satisfies \be{L4}
	|u(x,t)|\le \eps_0\psi(t),\quad (x,t)\in\mathcal{Q},
	\ee
	then $u$ is uniformly bounded in a neighborhood of $(a,T)$. More precisely,
	$$\sup_{|x-a|<\delta/4}|u(x,t)|\le C,\quad \hbox{ for all} \quad t\in\Bigl(T-\frac{\delta^2}{2},T\Bigr),$$	
	where $C>0$ depends only on $n,f,\delta$ and $\|u(T-\frac{\delta^2}{2})\|_{L^\infty({B_a(\delta)})}$.
\end{prop}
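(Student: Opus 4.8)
The plan is to reduce the statement to a parabolic comparison argument for $v:=|u|$, adapting to this non-scale-invariant setting the scheme of \cite[Proposition~25.1]{quittner2019superlinear} for the pure power case. \emph{Step~1: a linear differential inequality for $|u|$.} From \eqref{hypgq} one gets $f(s)\ge (s/X)^q f(X)$ for $z_0<X\le s$, hence $F(X)=\int_X^\infty\frac{ds}{f(s)}\le\frac{X}{(q-1)f(X)}$ for $X$ large; since $\psi'=f(\psi)$ and $\psi(t)=F^{-1}(T-t)$, this yields the basic ODE estimate $\frac{f(\psi(t))}{\psi(t)}=\frac{\psi'(t)}{\psi(t)}\le\frac1{(q-1)(T-t)}$ as $t\to T$, and, taking $z_1>z_0$ with $c_1:=g(z_1)>0$ so that $f(s)\ge c_1 s^q$ for $s\ge z_1$, also $\psi(t)\le C_1(T-t)^{-1/(q-1)}$. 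Since $f$ is odd and $u$ solves \eqref{L3}, Kato's inequality gives $\partial_t v-\Delta v\le f(v)$ in $\mathcal D'(\mathcal Q)$. For $0\le v\le z_0$ one has $f(v)\le L_0 v$ with $L_0:=\max_{[0,z_0]}|f'|$ (recall $f(0)=0$), and for $z_0<v\le\eps_0\psi(t)$ the monotonicity of $g$ and of $z\mapsto z^{q-1}$ combined with the ODE estimate give
\[
\frac{f(v)}{v}=v^{q-1}g(v)\le(\eps_0\psi)^{q-1}g(\eps_0\psi)\le\eps_0^{q-1}\psi^{q-1}g(\psi)=\eps_0^{q-1}\frac{f(\psi)}{\psi}\le\frac{\eps_0^{q-1}}{(q-1)(T-t)}.
\]
Hence, shrinking $\delta_0$ so that $\psi$ is defined on $[T-\delta_0^2,T)$ with $\eps_0\psi>z_0$ there, and replacing $v$ by $e^{-L_0t}v$ to absorb $L_0v$, one obtains $\partial_t v-\Delta v\le\frac{c}{T-t}v$ in $\mathcal Q$, with $c:=\eps_0^{q-1}/(q-1)<1/(q-1)$, together with $0\le v\le\eps_0\psi(t)\le C_1\eps_0(T-t)^{-1/(q-1)}$; note that $c\to0$ as $\eps_0\to0$.

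\emph{Step~2: comparison.} On the sub-cylinder $\mathcal Q':=B(a,\delta/2)\times(T-\tfrac{\delta^2}{2},T)$ one builds a supersolution $\bar v$ of $\partial_t w-\Delta w=\tfrac{c}{T-t}w$ dominating $v$ on the parabolic boundary of $\mathcal Q'$ — on the bottom using $v\le N_0:=\|u(T-\tfrac{\delta^2}{2})\|_{L^\infty(B(a,\delta))}$, on the lateral boundary using $v\le\eps_0\psi(t)\le C_1\eps_0(T-t)^{-1/(q-1)}$ from Step~1 — and bounded on $\overline{B(a,\delta/4)}\times[T-\tfrac{\delta^2}{2},T)$, with bound depending only on $n,f,\delta,N_0$; the maximum principle then gives $v\le\bar v$ on $\mathcal Q'$, which is the claim. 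The construction uses decisively that $c$ is small: $\bar v$ is taken as a cut-off localized near the annulus $|x-a|=\delta/2$, carrying the factor $(T-t)^{-1/(q-1)}$ needed to absorb the lateral data (harmless away from that annulus), plus a ``core'' supersolution governed by the singular potential $\tfrac{c}{T-t}$; this first yields an improved bound on a slightly smaller cylinder, which is then upgraded to boundedness by iteration, re-running the estimates of Step~1 with $\eps_0\psi(t)$ replaced by the successive bounds and with the effective coefficient kept small. This is the non-autonomous counterpart of the argument of \cite[Proposition~25.1]{quittner2019superlinear}.

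\emph{Main obstacle.} The delicate point is the supersolution and the iteration in Step~2: the potential $c/(T-t)$ has a non-integrable singularity at $t=T$, so no comparison function bounded at $(a,T)$ can be produced in a single step, and one must iterate while keeping every constant under control — which works precisely because $c=\eps_0^{q-1}/(q-1)$ is small. A further subtlety is that \eqref{hypgq} controls $f$ only from below near infinity, so $\psi$ may diverge arbitrarily slowly; the estimates must therefore be organized so as never to invoke a lower bound on $\psi$.
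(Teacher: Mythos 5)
Your Step~1 is correct and is essentially the same mechanism the paper uses at the corresponding stage: the pointwise bound $f(v)/v=v^{q-1}g(v)\le\eps_0^{q-1}\psi^{q-1}g(\psi)=\eps_0^{q-1}f(\psi)/\psi\le\frac{\eps_0^{q-1}}{(q-1)(T-t)}$ is exactly what the paper exploits. The genuine gap is in Step~2. By passing from the coefficient $\eps_0^{q-1}f(\psi)/\psi$ to the larger coefficient $c/(T-t)$, you discard the only quantitative information that survives under the one-sided hypothesis \eqref{hypgq}. The comparison function attached to the potential $c/(T-t)$ is $K(T-t)^{-c}$, so one pass of your argument yields at best $v\le K(T-t)^{-c}$ in the interior. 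But \eqref{hypgq} gives no lower bound on the growth of $\psi$: it only forces $\psi\le C(T-t)^{-1/(q-1)}$, and $\psi$ may blow up slower than every power of $(T-t)^{-1}$ (take $f(s)=e^s$ for large $s$, so that $g(s)=s^{-q}e^s$ is eventually non-decreasing and $\psi(t)\sim\log\frac{1}{T-t}$). In that regime $K(T-t)^{-c}\gg\eps_0\psi(t)$ for every $c>0$, so the ``improved'' bound is weaker than the hypothesis \eqref{L4} you started from, and re-running Step~1 with the successive bounds reproduces the same coefficient $c$ at every stage: the iteration does not close. (Re-running it differently, e.g.\ keeping $g(v)\le g(\psi)$ and inserting $v^{q-1}\le K^{q-1}(T-t)^{-c(q-1)}$, requires precisely the lower bound $\psi\gtrsim(T-t)^{-c}$ that you yourself flag as unavailable.) In addition, the supersolution that carries the singular lateral data $(T-t)^{-1/(q-1)}$ while staying bounded on $B(a,\delta/4)$ is only described, not constructed.

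The paper's proof repairs exactly this point by never leaving the $\psi$-scale. It works with $v=u^2\phi^2$ (the cutoff $\phi$ annihilates the lateral boundary, so no singular barrier is needed), derives $v_t-\Delta v\le C_1\varepsilon^{2\mu}\frac{f(\psi)}{\psi}v+C_2$, and compares with $\overline v=K\psi^\theta$, which is a supersolution as soon as $\theta\ge 2C_1\varepsilon^{2\mu}$ because $(\psi^\theta)'=\theta\frac{f(\psi)}{\psi}\psi^\theta$; this gives $u^2\le K\psi^\theta$ with $\theta$ small, an estimate genuinely stronger than \eqref{L4} for \emph{every} admissible $f$. A single further comparison with the bounded supersolution $K_2+\tilde K_1t-\frac{6}{q-1}\psi^{(1-q)/2}$, whose time derivative is exactly $3\psi^{(q-1)/2}g(\psi)+\tilde K_1$ and dominates the source once $\theta<(q-1)/2(q+1)$, then yields boundedness — no infinite iteration is required. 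If you want to salvage your scheme, replace $(T-t)^{-c}$ by $K\psi^{\theta}$ as the comparison function; your Step~1 inequality $f(v)/v\le\eps_0^{q-1}f(\psi)/\psi$ already puts you in position to do so.
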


 \begin{rmq}\label{rempropL3}
 (i) We note that under assumption \eqref{hypgq}, the ODE $y'=f(y)$ indeed has a positive solution $\psi(t)$ on some interval $(T-\delta_0^2,T)$ which blows up at $t=T$.

\smallskip

(ii) The result in Proposition \ref{LDC} is of independent interest since it is valid for any $p>1$ 
and more general nonlinearities than Theorem~\ref{progen}.
	Note that no boundary conditions are assumed and that this is a purely local result. 
	\smallskip
	
	(iii) Proposition \ref{LDC} is the analogue of
	 \cite[Theorem 2.1]{giga1989nondegeneracy} for the pure power case.
	 See also \cite[Theorem 25.3]{quittner2019superlinear}, which provides a different proof, 
based on comparison, a quadratic change of unknown and  a cut-off.
It turns out that, by suitable modifications of the arguments in \cite{quittner2019superlinear}, we can handle 
 rather general nonlinearities without scale invariance.
 \end{rmq}
 
\begin{proof}[Proof of Proposition \ref{LDC}]
	By assumption \eqref{hypgq}, we may pick $\delta_0,C_0>0$ depending only on $f$, such that 
	\be{hypgq2}
	\psi(t)\ge z_0>1\quad\hbox{and}\quad f(\psi(t))\ge C_0\psi^q(t)\quad\hbox{ for all $t\in(T-\delta_0^2,T)$.}
	\ee
	Let $\delta\in(0,\delta_0]$.
By a space-time translation, we may assume $a = 0$ and $T = \delta^2$
and then $\mathcal{Q}=B_\delta\times(0,\delta^2)$.
Set \be{L44}
{ \mu=\min(1/2,(q-1)/4)}.
\ee
For given $R>0$, we may find $\phi\in C^2(\mathbb{R}^n)$ such that $0\le \phi\le 1$,
\be{L5}
\begin{cases}\phi(x)=0\text{ for } |x|\ge R/\sqrt{2}\\
\phi(x)=1\text{ for } |x|\le R/{2}
\end{cases}
\ee
and 
\be{L6}
|\nabla \phi|^2+|\Delta \phi^2|\le { C(n,R)}\phi^{2(1-\mu)}.
\ee

We choose $R=\delta$ and set \begin{equation*}
v=u^2\phi^2.
\end{equation*}
For $(x,t)\in \mathcal{Q}$, we have
$$v_t-\Delta v=2uu_t\phi^2-2\phi^2(u\Delta u+|\nabla u|^2)-8u\phi \nabla u\cdot\nabla\phi-u^2\Delta\phi^2.$$
Since $4|u\phi \nabla u\cdot\nabla\phi|\le\phi^2|\nabla u|^2+4u^2|\nabla \phi|^2$, therefore
\be{L9}
v_t-\Delta v\le 2 \phi^2|uf(u)|+u^2(8|\nabla\phi|^2+|\Delta \phi^2|).
\ee
Let $\varepsilon\in(0,1]$ to be chosen below. Setting $M_0=\displaystyle\sup_{|z|\le z_0}|zf(z)|$,
using  \eqref{hypgq}, \eqref{L4} and writing $|uf(u)|=|u|^{q-1}g(|u|)u^2$, we have
$$|uf(u)|\le
	\begin{cases}
	 \varepsilon^{q-1}\psi^{q-1}g(\psi)u^2& \hbox{if $|u|\ge z_0$} \\
M_0&\hbox{otherwise.} 
	\end{cases}
	$$
Note that $q-1\ge\frac{2\mu}{1-\mu}> 2\mu$ owing to \eqref{L44}. Using \eqref{L4} again and \eqref{L6}, it follows that
$$\begin{aligned}
v_t-\Delta v
&\le 2\bigl(\varepsilon^{q-1}\psi^{q-1}g(\psi)u^2+M_0\bigr)\phi^2+|u|^{2\mu}v^{1-\mu}\phi^{-2(1-\mu)}(8|\nabla\phi|^2+|\Delta\phi^2|)\\
&\le \varepsilon^{2\mu}\Bigl(2\frac{f(\psi)}{\psi}v+C(n,R)\psi^{2\mu}v^{1-\mu}\Bigr)+2M_0.
\end{aligned}$$
Next using Young's inequality and \eqref{hypgq2}, we may estimate the second term by
$$C(n,R)\psi^{ 2\mu}v^{ 1-\mu}
\le \psi^{\frac{ 2\mu}{1-\mu}}v+C^{\frac{1}{\mu}}(n,R)\le \psi^{q-1}v+C^{\frac{1}{\mu}}(n,R)
\le C_0^{-1}\frac{f(\psi)}{\psi}v+C^{\frac{1}{\mu}}(n,R).$$
Consequently, we obtain, for some $C_1=C_1(f)>0$ and $C_2=C_2(f,R,n)>0$,
\be{comment}
v_t-\Delta v\le C_1\varepsilon^{2\mu}\frac{f(\psi)}{\psi}v+C_2,\quad (x,t)\in \mathcal{Q}.
\ee

Let $\overline{v}=K\psi^\theta$ for $t\in(0,T)$, with $K,\theta>0$ to be chosen.
Using \eqref{hypgq2}, we see that
$$\overline{v}_t 
=K\theta\psi^{\theta-1}f(\psi)
 \ge  \frac{\theta}{2}\Bigl(\frac{f(\psi)}{\psi}\overline v+KC_0\Bigr).$$
Choosing $\theta= \min(2C_1\varepsilon^{2\mu},1)$ 
and ${ K=\max\Big(2C_2(C_0\theta)^{-1},{\|v(\cdot,T/2)\|_\infty}\Big)},$
 it follows that $\overline{v}$ is a supersolution to \eqref{comment} and that $\overline{v}(T/2)\ge\|v(\cdot,T/2)\|_\infty$. 
Since $v=0$ on $\partial B_\delta\times(0,T)$, we deduce from the comparison
principle that $v\le \overline{v}$ in $B_\delta\times[T/2,T)$, hence 
\be{L18}
 u^2\le K \psi^{\theta}\quad \text{in } B_{\delta/2}\times[T/2,T).
\ee
 At points $(x,t)$ where $|u|\ge z_1:=\max(z_0,K)$, inequality \eqref{L18} implies $\psi^\theta\ge K$ hence 
$|u|\le \psi^\theta\le \psi$ so that, by Young's inequality, \eqref{hypgq} and \eqref{hypgq2},
$$2|uf(u)|+C(n,R)u^2\le 3|u|^{q+1}g(|u|)+C(n,R)
\le 3\psi^{(q+1)\theta}g(\psi)+C(n,R).$$
It follows that
\be{L18b}
2|uf(u)|+C(n,R)u^2
\le 3\psi^{(q+1)\theta}g(\psi)+K_1\quad \text{in } B_{\delta/2}\times[T/2,T),
\ee
with $K_1=K_1\bigl(n,R,f,\theta,\|u(\cdot,T/2)\|_\infty\bigr)>0$.

\smallskip

Now consider $v =u^2\phi^2$ with $R=\delta/2$ instead of $R=\delta$ in \eqref{L5}.
Taking $\varepsilon=\eps(f)>0$ sufficiently small so that  $\theta=2C_1\varepsilon^{2\mu}<(q-1)/2(q+1)$,
inequalities \eqref{L9} and \eqref{L18b} imply
$$ v_t-\Delta v \le 3\psi^{(q-1)/2}g(\psi)+\tilde K_1 \quad \text{in }  B_{\delta/2}\times[T/2,T),$$ 
with $\tilde K_1=\tilde K_1\bigl(n,R,f,{\|v(\cdot,T/2)\|_\infty}\bigr)>0$.
Let  $\tilde{v}=K_2+\tilde K_1 t-\frac{6}{q-1}\psi^{(1-q)/2}$ with $K_2>0$ to be chosen. By a simple computation, we have 
$$\tilde{v}_t= 3 \psi^{-(q+1)/2}f(\psi)+\tilde K_1=3 \psi^{(q-1)/2}g(\psi)+\tilde K_1.$$
 Choosing 
$$ K_2={\|v(\cdot,T/2)\|_\infty}+\frac{6}{q-1}\psi^{(1-q)/2}(T/2)$$
and noting that $\psi'\ge 0$ on $(T/2,T)$, it follows that $v\le \tilde{v}$ on $\partial_p\Bigl(B_{\delta/2}\times[T/2,T)\Bigr)$.
Using the comparison principle once again, we obtain
$$ v\le \tilde v\le K_3:=K_2+\tilde K_1T,\quad \hbox{in }  B_{\delta/2}\times[T/2,T).$$
By the definition of $v$ and \eqref{L5}, we conclude that
 $$\sup_{|x|<\delta/4}u^2(t)\le K_3,\quad \frac{T}{2}<t<T.$$ 
\end{proof}

\subsection{Upper estimate of $|\nabla u|$} 
\label{sss}

 In this subsection we show that the upper blow-up estimate \eqref{dam1} implies a similar estimate for the gradient. The latter
 is needed for the existence and properties
of our energy functional in section \ref{ell17} below.

\begin{prop}\label{propo}
		Let  $f\in C^1(\R)$ be odd and assume 
		that there exist $m>q>1$ and $z_0>1$ such that 
	\be{hypgqm}
	\hbox{$f(z)>0$, $z^{-q}f(z)$ is { non-decreasing} and $z^{-m}f(z)$ is { non-increasing} for $z>z_0$.}
		\ee
		Let $u_0 \in L^\infty(\Omega)$ and assume that $T:=T_{\max} (u_0)<\infty$ and that $u$ satisfies \eqref{dam1} for some $M,\delta>0$.
		Then, there exists $t_0=t_0(f,T)\in(T-\delta,T)$ such that
	\be{pop}
	\|\nabla u(t)\|_\infty\le M_1 (T-t)^{-1/2}\psi(t), \quad  t_0<t<T, 
	\ee
	for some $M_1= M_1(\Omega,M,f)>0$.
\end{prop}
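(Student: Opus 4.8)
The plan is to prove \eqref{pop} by a localized parabolic rescaling, which reduces the estimate to standard interior and boundary gradient bounds for the heat equation with bounded right-hand side. Fix $t$ close to $T$, set $\sigma=(T-t)/2$, let $x_0\in\Omega$, and define
\[
v(y,s):=\frac{1}{\psi(t)}\,u\bigl(x_0+\sigma^{1/2}y,\ t+\sigma s\bigr),\qquad s\in(-1,0],
\]
for $y$ in $D_0\cap B_2$, where $D_0:=\sigma^{-1/2}(\Omega-x_0)$; if $x_0$ lies within $2\sigma^{1/2}$ of $\partial\Omega$ we instead center the rescaling at a nearest boundary point. Then $v_s-\Delta v=g$ on $D_0\cap B_2$, with $g(y,s):=\frac{\sigma}{\psi(t)}f\bigl(\psi(t)v(y,s)\bigr)$, and $v=0$ on the flattened boundary portion. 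Since $\psi$ is increasing and $t+\sigma s\le t<T$, the type~I bound \eqref{dam1} yields $\|v\|_\infty\le M$, provided $t_0$ is taken close enough to $T$ that $t-\sigma>T-\delta$ (and that $\psi>z_0$ throughout the relevant interval).

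The key point is a uniform bound on the forcing $g$; this is where the structural hypothesis \eqref{hypgqm} enters, combined with the identity $T-t=F(\psi(t))$ from \eqref{psiF} (which plays the role of the exact scaling available when $f(u)=u^p$). Since $z\mapsto z^{-q}f(z)$ is nondecreasing, $f$ is positive and nondecreasing on $(z_0,\infty)$ and $f(s)\ge(s/\psi(t))^q f(\psi(t))$ for $s\ge\psi(t)>z_0$; integrating gives
\[
T-t=\int_{\psi(t)}^{\infty}\frac{ds}{f(s)}\le\frac{\psi(t)}{(q-1)f(\psi(t))}.
\]
Since $z\mapsto z^{-m}f(z)$ is nonincreasing, $f(Z)\le(Z/z)^m f(z)$ for $z_0<z\le Z$; using this, the oddness of $f$ and $|v|\le M$, one obtains $|f(\psi(t)v)|\le\max(1,M)^m f(\psi(t))+M_0$ with $M_0:=\sup_{|z|\le z_0}|f(z)|$. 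Hence
\[
\|g\|_\infty\le\frac{\max(1,M)^m}{2}\,\frac{(T-t)f(\psi(t))}{\psi(t)}+\frac{(T-t)M_0}{2\psi(t)}\le\frac{\max(1,M)^m}{2(q-1)}+o(1)
\]
as $t\to T$, so $\|g\|_\infty\le C(M,f)$ for $t_0<t<T$. It then remains to invoke parabolic regularity: by interior $W^{2,1}_r$ estimates followed by the parabolic Sobolev embedding (see, e.g., \cite{quittner2019superlinear}), a solution of $v_s-\Delta v=g$ on $B_2\times(-1,0]$ with $\|v\|_\infty+\|g\|_\infty\le C$ has $|\nabla v|\le C'=C'(n,C)$ at the point corresponding to $(x_0,t)$; when $x_0$ is near $\partial\Omega$ the same bound follows from the analogous estimate up to a flattened portion of the boundary (on which $v=0$), with $C'$ then depending also on the uniform smoothness of $\Omega$. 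Unravelling the rescaling, $\nabla v=\sigma^{1/2}\psi(t)^{-1}\nabla u(x_0,t)$ at that point, so $|\nabla u(x_0,t)|\le\sqrt2\,C'\,(T-t)^{-1/2}\psi(t)$; since $x_0\in\Omega$ was arbitrary, this is \eqref{pop}.

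I expect the two delicate points to be the following. First, the uniform bound on $g$: the structural conditions \eqref{hypgqm} and the relation $T-t=F(\psi(t))$ must be combined carefully, and the upper exponent $m$ is needed precisely to control $f(M\psi)$ by $f(\psi)$ when $M>1$ (for $M\le1$ it would be superfluous). Second, the passage to a gradient bound \emph{uniform up to $\partial\Omega$}: this rests on the uniform smoothness of $\Omega$ together with the fact that at the vanishing scale $\sigma^{1/2}$ the boundary becomes asymptotically flat, which is what makes the boundary parabolic estimate uniform in both $x_0$ and $t$, and is the source of the dependence of $M_1$ on $\Omega$.
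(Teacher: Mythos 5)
Your proposal is correct, and it reaches \eqref{pop} by a genuinely different final step than the paper. The first half is essentially the same computation in different clothing: the paper proves the clean inequality $\sup_{X\ge k^{-1}z_0}\frac{f(X)}{X}F(kX)\le \frac{k^{1-m}}{q-1}$ (its \eqref{cor2}), using the $q$-monotonicity to control $F(X)f(X)/X$ and the $m$-monotonicity to compare $F(kX)$ with $F(X)$, and then combines it with $F(\psi(t))=T-t$ and \eqref{dam1} to get $\|f(u(t))\|_\infty\le C(M,f)\,\psi(t)/(T-t)$ (its \eqref{cor}); you use exactly the same two structural facts, the $q$-one to bound $(T-t)f(\psi)/\psi\le 1/(q-1)$ and the $m$-one to bound $f(M\psi)$ by $M^m f(\psi)$, so your uniform bound on $g$ is literally the rescaled form of \eqref{cor}. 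Where you diverge is the passage from this to the gradient bound: the paper writes Duhamel's formula and applies the smoothing estimate $\|\nabla e^{-\tau A}v\|_\infty\le C(\Omega)\tau^{-1/2}\|v\|_\infty$ with the time shift $s=2t-T$, whereas you rescale at parabolic scale $\sigma^{1/2}=\sqrt{(T-t)/2}$ and invoke interior (and up-to-the-boundary) $W^{2,1}_r$ estimates plus Sobolev embedding. Both are legitimate; the semigroup route is shorter because the gradient estimate for $e^{-\tau A}$ on a uniformly smooth domain already encapsulates all boundary regularity in a single quotable inequality, while your route is purely local (it only uses the equation and the type~I bound on a backward parabolic cylinder) at the price of having to justify that the boundary $W^{2,1}_r$ constants stay uniform as the rescaled boundary flattens --- which is exactly where the uniform smoothness of $\Omega$ must be invoked, as you note. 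Your observation that the exponent $m$ is only needed when $M>1$ matches the paper's choice $k=\min(1,M^{-1})$ in \eqref{cor2}.
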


 \begin{rmq}\label{rempropo}
Under  assumption \eqref{sem} 
 of Theorem \ref{RDT}, property
	\eqref{hypgqm} is satisfied for any $q,m$ such that $1\le q<p<m$, owing to
 $$\bigl(z^{p-q}L(z)\bigr)'=(p-q)z^{p-q-1}L(z)+z^{p-q}L'(z)=z^{p-q-1}L(z)\Bigl(p-q+\frac{zL'(z)}{L(z)}\Bigl).$$
 \end{rmq}

The proof of Proposition \ref{propo} is based on modifications of arguments in \cite{giga1985asymptotically, quittner2019superlinear},
	suitably adapted to cover larger classes of nonlinearities.

\begin{proof}[Proof of Proposition \ref{propo}]  Let $t_1\in(\max(0,T-1),T)$ be such that $\psi(t)\ge z_0$ in $(t_1,T)$.
 We first claim that
\be{cor}
			\|f(u(t))\|_\infty\le 		 C(M,f)\frac{\psi(t)}{T-t},\quad t_1<t<T.
			\ee
		To this end, we firstly show that 
		\be{cor2}
		\sup_{X\ge k^{-1}z_0}\frac{f(X)}{X}F(kX)\le\frac{k^{ 1-m}}{q-1}\quad\hbox{ for each $k\in(0,1]$.}
		\ee 
		By  the increasing property in \eqref{hypgqm}, we note that, for all $X\ge z_0$,
		\be{cor3}
		\frac{f(X)}{X}F(X)=\frac{f(X)}{X}\int_{X}^{\infty}\frac{ds}{f(s)}=\frac{f(X)}{X}\int_{X}^{\infty}\frac{s^qds}{s^qf(s)}
		\le \frac{f(X)}{X}\frac{X^q}{f(X)}\int_{X}^{\infty}\frac{ds}{s^q}=\frac{1}{q-1}.
		\ee 
		On the other hand, 
		 the decreasing property in \eqref{hypgqm} yields $f(kz)\ge k^{ m}f(z)$ for all $z\ge k^{-1}z_0$ hence, by the change of variable $s=kz$,
		$$
		F(kX)=\int_{kX}^{\infty}\frac{ds}{f(s)}=k\int_{X}^{\infty}\frac{dz}{f(kz)}
		\le k^{ 1-m}\int_{X}^{\infty}\frac{dz}{f(z)}=k^{ 1-m}F(X),\quad X\ge k^{-1}z_0,
		$$
		and this combined with \eqref{cor3} implies \eqref{cor2}.
		Next, at points where $|u(x,t)|\ge  \max(M,1) z_0$, applying \eqref{cor2} with $k=\min(1,M^{-1})$
		and using \eqref{dam1}, the decreasing monotonicity of $F$  and \eqref{psiF}, we have 
		$$|f(u(x,t))|=\Bigl|\frac{f(u(x,t))}{u(x,t)}u(x,t)\Bigr| \le \frac{C(M,f)\psi(t)}{F(k|u(x,t)|)}
		\le \frac{C(M,f)\psi(t)}{F(\psi(t))} = \frac{C(M,f)\psi(t)}{T-t}.$$
		Since  $\psi(t)\ge 1\ge T-t$, this implies
		\begin{equation*}
		\|f(u(t))\|_\infty \le  C(M,f)\Bigl(1+\frac{\psi(t)}{T-t}\Bigr) \le2C(M,f)\frac{\psi(t)}{T-t},\quad  t_1<t<T,
		\end{equation*}
hence \eqref{cor}.

	 Let $t_1<s<t<T$.
	Denote by $(e^{-\tau A})_{\tau\ge0}$ the heat semigroup on $\Omega$ (with Dirichlet conditions for $\Omega\neq\mathbb{R}^n$).
	By the variation of constants formula, the gradient estimate for $e^{-\tau A}$ 
	(see, e.g, \cite[Proposition~48.7*]{quittner2019superlinear}) and \eqref{dam1}, 
	we have 
	\begin{align*}
	\|\nabla u(t)\|_\infty&\le \|\nabla e^{-(t-s)A}u(s)\|_\infty+\int_{s}^{t}\|\nabla e^{-(t-\tau)A}f(u(\tau))\|_\infty d\tau\\
	&\le C(\Omega)(t-s)^{-1/2}\|u(s)\|_\infty+C(\Omega)\int_{s}^{t}(t-\tau)^{-1/2} \bigl\|f(u(\tau))\bigr\|_\infty d\tau\\
	&\le C(\Omega,M)(t-s)^{-1/2}\psi(s)+C(\Omega,M,f)\int_{s}^{t}(t-\tau)^{-1/2}(T-\tau)^{-1}\psi(\tau) d\tau,\\ 
	&\le C(\Omega,M,f)\Bigl((t-s)^{-1/2}\psi(s)+\psi(t)(T-t)^{-1}\int_{s}^{t}(t-\tau)^{-1/2}d\tau  \Bigl),\\
	&\le C(\Omega,M,f)\Bigl((t-s)^{-1/2}+2(T-t)^{-1}(t-s)^{1/2}\Bigl)\psi(t).
	\end{align*}
	 Assume $t>t_0:=(t_1+T)/2$ and choose $s:=2t-T$. Since $t_1<s<t<T$ and $t-s=T-t$, it follows that
	$$	\|\nabla u(t)\|_\infty\le  3C(\Omega,M,f)(T-t)^{-1/2}\psi(t).
$$

	\end{proof}

\subsection{Weighted energy functional and its properties}\label{ell17}
   In subsections~\ref{ell17}-\ref{sssss}, we shall work under the assumptions of Theorem~\ref{progen}. 
 As mentioned in Subsection~\ref{ell55}, 
 our proof is essentially based on 
  transformation by similarity variables and ODE renormalization and on weighted energy functional, 
 after appropriate modifications of ideas in 
\cite{giga1989cha,giga1985asymptotically,giga1989nondegeneracy} (see also \cite[section 23.4]{quittner2019superlinear}). With this transformation we obtain a rescaled equation  that we rewrite (cf \eqref{VraieE}) as the pure power case  plus an 
  additional term $H$  coming from $L$. Here  new 
 difficulties arise 
 in the case of non scale invariant nonlinearities ($L\ncong1$). Under assumption \eqref{sem}, we  shall show that 
 $\|H(s,\cdot)\|_\infty\to0$ as $s\to\infty$ and  is globally square integrable in space-time with respect to the Gaussian measure, which  will guarantee the existence of  the energy and its properties (cf.~Lemma \ref{lem2}). To this end, we divide this section 
  in three parts as follows.

\smallskip

\subsubsection{Preliminaries.} 
Let $u$ be a solution of \eqref{eqE2} with blow-up time  $T\in (0,\infty)$. 
Let $t_0$ be given by Proposition \ref{propo}, and take $s_0>\max(2,-\log(T-t_0))$ 
such that $e^{\beta s}\ge s^{4\alpha}\ge 16$ for all $s\ge s_0$ and such that $\psi_1(s):=\psi(T-e^{-s})$ exists for all $s\ge s_0$ and satisfies
\be{decaypsi1}
\psi_1(s)\ge e^{\beta s/2},\quad s\ge s_0
\ee
(inequality \eqref{decaypsi1} follows from \eqref{F}, \eqref{psiF}\ and the fact that, by Remark~\ref{rempropo}, 
$f(s)\le s^{p+\eps}$ as $s\to\infty$ for any $\eps>0$).

Let $a\in \Omega$. We set 
\be{dam111}
y:=\frac{x-a}{\sqrt{T-t}},\quad s:=-\log(T-t)
\ee
and define the rescaled function
\be{dam11}
w(y,s)=w_a(y,s):=\frac{u(a+y e^{-s/2},T-e^{-s})}{\psi_1(s)}.
\ee
Note that it is equivalent to $u(x,t)=\psi(t)w(y,s)$. By a simple computation using $\psi_1'=e^{-s}f(\psi_1)$, we have  
$$
\nabla w=\frac{e^{-s/2}}{\psi_1}\nabla u,\qquad  \Delta w=\frac{e^{-s}}{\psi_1}\Delta u,
$$
and
\be{dam11b}
w_s=-\frac{y}{2}\cdot\nabla w+e^{-s}\frac{u_t}{\psi_1}-e^{-s}\frac{f(\psi_1)}{\psi_1}w,
\ee
hence $$w_s-\Delta w+\frac{1}{2}y\cdot \nabla w=\frac{e^{-s}}{\psi_1}\Bigl(u_t-\Delta u-f(\psi_1)w\Bigl).$$
Then $w$ is a global (in time) solution of 
\be{dam2}
w_s-\Delta w +\frac{1}{2}y\cdot\nabla w= h(s)\Big(|w|^{p-1}w\frac{L(\psi_1 |w|)}{L(\psi_1)}-w\Big),\quad (y,s)\in\mathcal{W}_a,
\ee
where
\be{LO}
\mathcal{W}_a:=\{(y,s) : s_0<s<\infty, \ y\in D(s)\},\quad D(s){\strut =D_a(s)}:= e^{s/2}(\Omega-a),\quad 
h(s):=e^{-s}\psi_1^{p-1}L(\psi_1).
\ee 
 Note that $\lim_\infty D(s)=\mathbb{R}^n$ and that $\mathcal{W}_a=\mathbb{R}^n\times(s_0,\infty)$ for $\Omega=\mathbb{R}^n$. 
 In term of the variables $y$ and $s$ and rescaled function $w$, we observe that \eqref{dam1} 
  and Proposition \ref{propo} imply the uniform estimates
 \be{wysM}
 |w(y,s)|\le M,
 \ee
 \be{DwysM1}
 |\nabla w(y,s)|\le M_1,
 \ee
where $M_1=M_1(\Omega,M,p,T)$, and the desired  result \eqref{jojo} can be restated as 
$$
\lim_{s\to \infty} w(y,s)=\pm 1,\ \text{uniformly on compact sets } |y|\le C.
$$
As in \cite{giga1985asymptotically}, we shall apply dynamical systems arguments to show that the global bounded solution $w$ of \eqref{dam2}
is attracted by the set of equilibria,  and it will turn out that these are the same as in the pure power case, namely they are solutions of
\be{dam39}
\Delta w-\frac{1}{2}y\cdot\nabla w-\beta w+\beta|w|^{p-1}w=0\quad \text{in }\  \mathbb{R}^n.
\ee
As a main difference
with the case $f(u)=|u|^{p-1}u$ in \cite{giga1985asymptotically}, equation \eqref{dam2} is not autonomous and the obtention of a Liapunov functional property requires additional effort.

\smallskip

\subsubsection{Control of nonautonomous terms.} 
We  rewrite equation \eqref{dam2}  as 
\be{VraieE0}
w_s-\Delta w +\frac{1}{2}y\cdot\nabla w=\beta|w|^{p-1}w-\beta w+H(s,y),
\ee
equivalently
\be{VraieE}
\rho w_s-\nabla\cdot(\rho\nabla w)=\rho\beta(|w|^{p-1}-1)w+\rho H(s,y),
\ee
where the nonautonomous term is given by
\be{VraieEE}
H(s,y):=\bigl(h(s)-\beta\bigl)\big(|w|^{p-1}-1\big)w+ h(s)|w|^{p-1}w\bigg(\frac{L(\psi_1|w|)}{L(\psi_1)}-1\bigg){\strut=:H_1(s,y)+H_2(s,y),}
\ee
and the Gaussian weight $\rho$ is defined by $\rho(y):=e^{- |y|^2/4}$.
Under  assumption \eqref{sem}
a key property for the existence of a Liapunov functional and the control of the nonautonomous terms is the following:
  
\begin{lem}\label{lem} 
Under the assumptions of Theorem~\ref{progen} with any $p>1$, we have
\be{decayH}
\|H(s,\cdot)\|_\infty\le C_0 s^{-\alpha}\log s,\quad s>s_0,
\ee
with $C_0>0$ independent of $a$.
\end{lem}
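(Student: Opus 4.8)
The plan is to estimate $H_1$ and $H_2$ separately, using the uniform bound $|w|\le M$ from \eqref{wysM} together with the slow variation hypothesis \eqref{sem} and the lower bound \eqref{decaypsi1} on $\psi_1$. For the first term, since $|w|\le M$ and $p>1$, the factor $\bigl(|w|^{p-1}-1\bigr)w$ is bounded by a constant depending only on $M$ and $p$, so it suffices to show $|h(s)-\beta|\le C s^{-\alpha}\log s$. Recalling $h(s)=e^{-s}\psi_1^{p-1}L(\psi_1)=e^{-s}f(\psi_1)/\psi_1$ and $\psi_1'=e^{-s}f(\psi_1)$, one has $h(s)=\psi_1'/\psi_1=(\log\psi_1)'$. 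The plan is to compute the derivative of $h$ (or equivalently to exploit the ODE $\psi_1'=e^{-s}f(\psi_1)$ directly): differentiating $\log h = -s+(p-1)\log\psi_1+\log L(\psi_1)$ gives $h'/h = -1+(p-1)h+h\,\psi_1 L'(\psi_1)/L(\psi_1)$. In the pure power case $L\equiv$ const this forces $h\to\beta=1/(p-1)$; here the extra term is $O(h\log^{-\alpha}\psi_1)$ by \eqref{sem}, and since $\psi_1\ge e^{\beta s/2}$ we get $\log^{-\alpha}\psi_1 \le C s^{-\alpha}$. A Gronwall/ODE-comparison argument on $h$ (whose only equilibrium, up to the small perturbation, is $\beta$) then yields $|h(s)-\beta|\le C s^{-\alpha}$, which is even better than needed and handles $H_1$.

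For $H_2=h(s)|w|^{p-1}w\bigl(L(\psi_1|w|)/L(\psi_1)-1\bigr)$, the prefactor $h(s)|w|^{p-1}w$ is bounded (using $h\to\beta$ from the first step and $|w|\le M$), so everything reduces to bounding $\bigl|L(\psi_1|w|)/L(\psi_1)-1\bigr|$ uniformly in $y$. Here I would use the representation of $L$ coming from \eqref{sem}: for $z\ge z$ large, $\log L(z) - \log L(z_1) = \int_{z_1}^{z} \frac{L'(\sigma)}{L(\sigma)}\,d\sigma = \int_{z_1}^{z} \frac{\sigma L'(\sigma)/L(\sigma)}{\sigma}\,d\sigma$, and $|\sigma L'(\sigma)/L(\sigma)|\le C\log^{-\alpha}\sigma$. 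Applying this between $z_1=\psi_1$ and $z=\psi_1|w|$ (one must treat $|w|\le 1$ and $|w|\ge 1$, but the integral bound is symmetric in the endpoints), I get
$$\Bigl|\log\frac{L(\psi_1|w|)}{L(\psi_1)}\Bigr|\le C\Bigl|\int_{\psi_1}^{\psi_1|w|}\frac{\log^{-\alpha}\sigma}{\sigma}\,d\sigma\Bigr|\le C\log^{-\alpha}(\psi_1/M)\,\bigl|\log|w|\bigr|,$$
using that on the interval with endpoints $\psi_1$ and $\psi_1|w|$ one has $\sigma\ge \psi_1/M \ge e^{\beta s/2}/M$, so $\log^{-\alpha}\sigma\le C s^{-\alpha}$. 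The subtlety is the factor $\bigl|\log|w|\bigr|$, which is unbounded as $|w|\to 0$. However $H_2$ also carries the factor $|w|^{p-1}w$ in front, and $|w|^p\bigl|\log|w|\bigr|$ is bounded on $[0,M]$ (it tends to $0$ at the origin). Hence $|H_2(s,y)|\le C s^{-\alpha}\cdot \sup_{0<r\le M} r^p|\log r| \le C s^{-\alpha}$; in fact one can afford to be wasteful and bound $|\log|w||\le \log(1/|w|)+\log M \lesssim \log s$ on the set where $|w|\ge e^{-s}$, say, while on $\{|w|<e^{-s}\}$ the prefactor $|w|^{p-1}w$ is exponentially small — this is presumably where the $\log s$ in \eqref{decayH} comes from, giving the slightly weaker but amply sufficient bound $C_0 s^{-\alpha}\log s$.

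Combining, $\|H(s,\cdot)\|_\infty\le \|H_1(s,\cdot)\|_\infty+\|H_2(s,\cdot)\|_\infty\le C s^{-\alpha}+C s^{-\alpha}\log s\le C_0 s^{-\alpha}\log s$ for $s>s_0$, with $C_0$ depending only on $M$, $p$ and the constants in \eqref{sem} — in particular independent of $a$, since all estimates are pointwise in $(y,s)$ and $\psi_1$ does not depend on $a$. The main obstacle I anticipate is the first step: establishing $h(s)\to\beta$ with the quantitative rate $O(s^{-\alpha})$ rather than merely $h(s)\to\beta$. This requires turning the ODE $h'/h=-1+(p-1)h+O(h\log^{-\alpha}\psi_1)$ into a decay estimate, which is a routine but slightly delicate linearization/Gronwall argument around the equilibrium $h=\beta$ (the linearized operator $\frac{d}{ds}$ of $h-\beta$ has coefficient $-1<0$ after accounting for the $(p-1)h$ term near $\beta$, so the perturbation $O(s^{-\alpha})$ is simply integrated against an exponentially decaying kernel); one also needs an a priori bound keeping $h$ bounded away from $0$ and $\infty$, which should follow from the type-I bound \eqref{dam1} and the lower bound \eqref{decaypsi1}. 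Everything else is a direct consequence of the slow variation integral representation and the crude polynomial-in-$s$ lower bound on $\psi_1$.
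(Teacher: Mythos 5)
Your overall strategy (split $H=H_1+H_2$, prove $|h(s)-\beta|\le Cs^{-\alpha}$ to control $H_1$, use the integral representation of $\log L$ for $H_2$) is the same as the paper's, but the step you yourself flag as the main obstacle rests on a sign error that breaks your proposed resolution. From $\log h=-s+(p-1)\log\psi_1+\log L(\psi_1)$ one indeed gets $h'=-h+(p-1)h^2+h^2\,\psi_1L'(\psi_1)/L(\psi_1)$; but writing $h=\beta+\eta$ and using $(p-1)\beta^2=\beta$ and $2(p-1)\beta=2$, the unperturbed part becomes $\eta'=\eta+(p-1)\eta^2$: the linearized coefficient is $+1$, not $-1$, so the equilibrium $h=\beta$ is \emph{unstable} for the forward flow. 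A forward Gronwall argument therefore cannot produce $|h-\beta|\le Cs^{-\alpha}$; one would have to integrate from $s=+\infty$ backwards, which presupposes already knowing that $h(s)\to\beta$ --- essentially the statement to be proved. The paper avoids the ODE altogether: integrating $F(X)=\int_X^\infty z^{-p}L^{-1}(z)\,dz$ by parts gives $\bigl|X^{p-1}L(X)F(X)-\beta\bigr|\le C\log^{-\alpha}X$, and since $F(\psi_1(s))=e^{-s}$ one has $h(s)=\psi_1^{p-1}L(\psi_1)F(\psi_1)$, whence $|h-\beta|\le C\log^{-\alpha}\psi_1\le Cs^{-\alpha}$ by \eqref{decaypsi1}. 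You need to replace your Gronwall step by this (or by some other argument not relying on forward stability).

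For $H_2$ your argument is close to the paper's but has a gap in the small-$|w|$ regime. The bound $|\sigma L'(\sigma)/L(\sigma)|\le C\log^{-\alpha}\sigma$ from \eqref{sem} is only available for \emph{large} $\sigma$, whereas your integration interval with endpoints $\psi_1|w|$ and $\psi_1$ reaches down to $\psi_1|w|$, which can be arbitrarily small (your claim $\sigma\ge\psi_1/M$ holds only when $|w|\ge 1/M$). Near the origin $L(z)=z^{-p}f(z)$ is not controlled by \eqref{sem} at all (it may blow up like $z^{1-p}$), so neither the representation $\log\bigl(L(\psi_1|w|)/L(\psi_1)\bigr)=\int L'/L$ nor the absorption by $|w|^p|\log|w||$ is justified there; and your fallback set $\{|w|<e^{-s}\}$ does not cover the intermediate range where $|w|\ge e^{-s}$ but $\psi_1|w|$ is still below the threshold of validity of \eqref{sem}. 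The paper handles exactly this regime by rewriting $H_2=h(s)w\bigl[g(|w|\psi_1)/g(\psi_1)-|w|^{p-1}\bigr]$ with $g(X)=f(X)/X$ and exploiting that $g$ is positive and nondecreasing for large arguments, on the set $\{|w|\le s^{-\alpha}\}$; on the complement one has $|w|\psi_1\ge e^{\beta s/4}$, the integral representation applies, and $|\log|w||\le C\log s$ produces the $\log s$ factor. Your $H_2$ estimate becomes correct once such a treatment of the region where $\psi_1|w|$ is not large is added.
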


\begin{proof}[Proof of Lemma~\ref{lem}]
  It suffices to show that $H_1$ and $H_2$ satisfy \eqref{decayH}. 
To prove it for $H_1$, we first claim that the function $h(s)=e^{-s}\psi_1^{p-1}L(\psi_1)$ satisfies

\be{claimhbeta}
|h(s)-\beta|\le Cs^{-\alpha},\quad s>s_0
	\ee
(here and in the rest of the proof, $C$ denotes a generic positive constant independent of $a, s, y$).
Indeed, integrating by parts, we have
$$
F(X)=\int_{X}^{\infty}\frac{dz}{f(z)}=\int_{X}^{\infty}\frac{z^{-p} dz}{L(z)}
=\beta\frac{X^{1-p}}{L(X)}-\beta\int_{X}^{\infty}z^{1-p}\frac{L'(z)}{L^2(z)}dz
$$
hence, owing to \eqref{sem},
\be{XpLF}
\bigl|X^{p-1}L(X)F(X)- \beta\bigr|
\le CX^{p-1}L(X)\int_{X}^{\infty} \frac{z^{-p}}{\log^\alpha{\hskip -2pt}z \, L(z)}dz
\le C\frac{X^{p-1}L(X)F(X)}{\log^\alpha {\hskip -2pt}X},\quad X\ge 2.
\ee
In particular $\displaystyle\lim_{X\to\infty} X^{p-1}L(X)F(X)=\beta$ and, going back to \eqref{XpLF}, we get
$$\bigl|X^{p-1}L(X)F(X)- \beta\bigr|\le C\log^{-\alpha}{\hskip -2pt}X,\quad X\ge 2.$$
Since $F(\psi_1(s))=F(\psi(T-e^{-s}))=e^{-s}$ in view of 
\eqref{psiF}, claim \eqref{claimhbeta} follows from \eqref{decaypsi1}.
Properties \eqref{wysM} and \eqref{claimhbeta} guarantee that $H_1$ satisfies \eqref{decayH}.

\smallskip

Let us show that $H_2$ satisfies \eqref{decayH}. 
To this end we  define 
$E=\bigl\{(y,s)\in \mathcal{W}_a;\ |w(y,s)|\le s^{-\alpha}\bigr\}$. 
{ 
We first note that \eqref{sem0}-\eqref{sem} guarantees the existence of $X_0$ such that $g(X):=\frac{f(X)}{X}$ is positive and non-decreasing on $[X_0,\infty)$. 
Let $\xi\in [0,1]$. If $\xi\psi_1\ge X_0$, then 
$0\le \frac{g(\xi\psi_1)}{g(\psi_1)}\le 1$. 
Whereas, if $\xi\psi_1\le X_0$, then 
$|\frac{g(\xi\psi_1)}{g(\psi_1)}|\le \frac{C}{g(\psi_1)}\le C$. Using \eqref{claimhbeta}, $L$ even, and $|w(y,s)|\le s^{-\alpha}\le 1$ for $(y,s)\in E$, we obtain}
	\be{(3)}
	|H_2(s,y)|{ \strut =|h(s)w| 
	\Big|\frac{g(|w|\psi_1)}{g(\psi_1)}-|w|^{p-1}\Big|}\le C |w|\le C s^{-\alpha},\quad (y,s)\in E.
	\ee
	Next consider the case when $(y,s)\in \mathcal{W}_a\setminus E$. 
	Then,  recalling the definition of $s_0$  before
	\eqref{decaypsi1}, we have 
	$|w|\psi_1\ge s^{-\alpha}e^{\beta s/2}\ge e^{\beta s/4}\ge 2$.
	By assumption \eqref{sem},
	$$\Sigma(X):=\sup_{z\ge X} \Bigl|{zL'(z)\over L(z)}\Big|\le C (\log X)^{-\alpha},\quad X\ge 2.$$
	Therefore,
	$\Sigma( e^{\beta s/4})\le C (\log(e^{\beta s/4}))^{-\alpha}\le 
	C s^{-\alpha}$, as well as $M\ge |w|\ge s^{-\alpha}$.
	 Since $\min(|w|\psi_1,\psi_1)\ge e^{\beta s/4}$, it follows that
	$$
	\Bigl|\log\Bigl({L(|w|\psi_1)\over L(\psi_1)}\Bigr)\Bigr|=\Bigl|\int_{|w|\psi_1}^{\psi_1} {L'(z)\over L(z)} dz\Bigr|
	\le \Sigma\bigl( e^{\beta s/4}\bigr) 
	\Bigl|\bigl[\log z\bigr]_{|w|\psi_1}^{\psi_1}\Bigl|\ 
	\le \Sigma\bigl( e^{\beta s/4}\bigr))|\log |w||
	\le C s^{-\alpha}\log s,
	$$
	hence
	$$|H_2(s,y)|\le C\Bigl|{L(|w|\psi_1)\over L(\psi_1)}-1\Bigr|\le C s^{-\alpha}\log s.$$
	Combining with \eqref{(3)}, we deduce that
	$$\|H_2(s,\cdot)\|_\infty\le  Cs^{-\alpha}+C s^{-\alpha}\log s\le C s^{-\alpha}\log s,\quad s\ge s_0.$$
	Consequently $H_2$, and hence $H$, satisfies \eqref{decayH}.
\end{proof}

 \subsubsection{Weighted energy functional and its properties.}
{ Fix $\delta>0$ and set $\Omega_\delta=\{x\in\Omega,\, d(x,\partial\Omega)\ge\delta\}$.
 For any $a\in \Omega_\delta$,} the Liapunov functional for equation \eqref{VraieE} { with $w=w_a$}
  will be given by the  perturbed  weighted energy  defined as follows: 
\be{LO2}
G[w](s):= E[w](s)+ C_1 s^{-\gamma},
\ee
where $\gamma=\alpha-\frac12>0$,
$$E[w]:=\int_{D(s)}\bigg(\frac{1}{2}|\nabla w|^2+\frac{\beta}{2}w^2-\frac{\beta}{p+1}|w|^{p+1}\bigg)\rho dy,$$
{ and the constant $C_1 > 0$ depends on $\delta$ but not on $a\in \Omega_\delta$.
In particular, for $\Omega=\R^n$, $C_1$ is independent of $a\in\R^n$.}
The term $C_1 s^{-\gamma}$ is designed to handle the effect of the perturbation $H$ on the energy,
 making crucial use of its square integrability in space-time (cf.~\eqref{ddpsi2}), made possible by the assumption $\alpha>1/2$ in \eqref{sem}.
 This energy enjoys the following  properties{.}
\begin{lem}\label{ell7}
Under the assumptions of Theorem~\ref{progen} with any $p>1$, we have
  $G[w]\in C^1(s_0,\infty)$ and, for all $s>s_0$,
 	\be{jojo1}
	\frac{d}{ds}G[w](s)\le -\frac{1}{2}\int_{D(s)}w_s^2\rho\le 0,
		 \ee
	\be{JO}
	G[w](s)\ge 0,
	\ee
	\be{L1}
	\int_{D(s)}w^2\rho dy\le C(n,p)\big[G(w)(s)\big]^{2/(p+1)}.
	\ee
Moreover,  for each {$\delta>0$ and} $s>s_0$, 
	\be{L2}
	{\Omega_\delta\ni\strut} a\mapsto G[w_a](s)\text{ is continuous}. 
	\ee
\end{lem}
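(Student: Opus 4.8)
The plan is to verify the four assertions \eqref{jojo1}--\eqref{L2} in order, treating \eqref{jojo1} as the main point and the others as relatively short consequences. First I would record the key analytic input: by \eqref{wysM}--\eqref{DwysM1} the rescaled solution $w=w_a$ and its gradient are bounded uniformly in $a$, and by Lemma~\ref{lem} the perturbation satisfies $\|H(s,\cdot)\|_\infty\le C_0 s^{-\alpha}\log s$; in particular, since $\alpha>\tfrac12$ and $\int_{D(s)}\rho\,dy\le\int_{\R^n}\rho\,dy<\infty$, the function $s\mapsto\|H(s,\cdot)\|_{L^2_\rho}^2$ is $O(s^{-2\alpha}\log^2 s)$ and hence integrable near $+\infty$; this is the property that makes the correction term $C_1 s^{-\gamma}$ with $\gamma=\alpha-\tfrac12>0$ do its job. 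I would also note that the boundary term coming from $\partial D(s)$ in any integration by parts vanishes: for $a\in\Omega_\delta$ the domain $D(s)=e^{s/2}(\Omega-a)$ recedes to infinity at speed $e^{s/2}$, $w$ vanishes on $\partial D(s)$ by the Dirichlet condition, and the Gaussian weight $\rho$ together with the bounds on $w,\nabla w$ kills the surface integrals; this is where $a\in\Omega_\delta$ (rather than merely $a\in\Omega$) is used.

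For \eqref{jojo1}: differentiate $E[w](s)=\int_{D(s)}e_w\,\rho\,dy$ with $e_w=\tfrac12|\nabla w|^2+\tfrac{\beta}{2}w^2-\tfrac{\beta}{p+1}|w|^{p+1}$ in $s$. The term from the moving domain $D(s)$ contributes a boundary integral which vanishes as just discussed; the term from differentiating the integrand gives $\int_{D(s)}(\nabla w\cdot\nabla w_s+\beta w w_s-\beta|w|^{p-1}w w_s)\rho\,dy$, and integrating the first piece by parts against $\rho$ turns $\nabla\cdot(\rho\nabla w)$ into the left-hand side of \eqref{VraieE}. Using \eqref{VraieE} to substitute $\nabla\cdot(\rho\nabla w)=\rho w_s-\rho\beta(|w|^{p-1}-1)w-\rho H$ yields
\[
\frac{d}{ds}E[w](s)=-\int_{D(s)}w_s^2\,\rho\,dy-\int_{D(s)}H\,w_s\,\rho\,dy.
\]
By Cauchy--Schwarz and Young, $\bigl|\int H w_s\rho\bigr|\le\tfrac12\int w_s^2\rho+\tfrac12\int H^2\rho\le\tfrac12\int w_s^2\rho+C s^{-2\alpha}\log^2 s$, so $\frac{d}{ds}E[w](s)\le-\tfrac12\int_{D(s)}w_s^2\rho\,dy+C s^{-2\alpha}\log^2 s$. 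Since $\tfrac{d}{ds}(C_1 s^{-\gamma})=-\gamma C_1 s^{-\gamma-1}$ and $2\alpha=1+2\gamma>\gamma+1$, choosing $C_1$ large enough (depending on $\delta$ through $C_0$ but not on $a\in\Omega_\delta$) absorbs the error term $Cs^{-2\alpha}\log^2 s\le \gamma C_1 s^{-\gamma-1}$ for $s>s_0$, giving \eqref{jojo1}; the regularity $G[w]\in C^1(s_0,\infty)$ follows from parabolic smoothing applied to \eqref{dam2}. I expect this absorption estimate, together with the careful justification that the boundary terms vanish, to be the main obstacle — everything else is routine.

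For \eqref{JO}: by \eqref{jojo1}, $G[w](s)$ is nonincreasing, so it suffices to show $\liminf_{s\to\infty}G[w](s)\ge 0$. Since $C_1 s^{-\gamma}\to 0$, this reduces to $\liminf E[w](s)\ge 0$, which follows as in \cite{giga1985asymptotically}: integrating \eqref{jojo1} gives $\int_{s_0}^\infty\!\int_{D(s)}w_s^2\rho\,dy\,ds<\infty$, so along a sequence $s_j\to\infty$ we have $\|w_s(\cdot,s_j)\|_{L^2_\rho}\to 0$; testing \eqref{VraieE} with $w$ and using $\|H(s,\cdot)\|_\infty\to 0$ shows $E[w](s_j)\to\liminf E[w](s)$ can be bounded below by (a multiple of) $\int|\nabla w|^2\rho+\beta\int w^2\rho - \tfrac{2\beta}{p+1}\int|w|^{p+1}\rho$ paired against the identity $\int|\nabla w|^2\rho+\beta\int w^2\rho=\beta\int|w|^{p+1}\rho+o(1)$, yielding $E[w](s_j)=\tfrac{\beta}{2}\bigl(1-\tfrac{2}{p+1}\bigr)\int|w|^{p+1}\rho+o(1)\ge o(1)$, hence $\liminf E[w]\ge 0$. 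For \eqref{L1}: from $E[w](s)\ge -\tfrac{\beta}{p+1}\int|w|^{p+1}\rho$... more directly, combine the lower bound $G[w](s)\ge E[w](s)\ge \tfrac{\beta}{2}\int w^2\rho-\tfrac{\beta}{p+1}\int|w|^{p+1}\rho$ with Jensen/Hölder on the probability-type measure $\rho\,dy/\!\int\rho$ to control $\int w^2\rho$ by a power of $\int|w|^{p+1}\rho$, or argue via \eqref{JO} applied to the rescaling $\lambda w$ of $w$ (which is not a solution, but whose energy $E[\lambda w]$ is an explicit polynomial in $\lambda$); optimizing in $\lambda$ forces the discriminant-type inequality $\int w^2\rho\le C(n,p)\,G[w](s)^{2/(p+1)}$, exactly as in \cite[Section~23.4]{quittner2019superlinear}. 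Finally \eqref{L2}: for fixed $s>s_0$, the map $a\mapsto w_a(\cdot,s)$ is continuous from $\Omega_\delta$ into $C^1$ on compact subsets of $D(s)$ by continuous dependence for \eqref{eqE2} (equivalently, $w_a(y,s)=u(a+ye^{-s/2},T-e^{-s})/\psi_1(s)$ depends continuously on $a$ since $u\in C(\overline\Omega\times(0,T))$); combined with the uniform bounds \eqref{wysM}--\eqref{DwysM1} and the Gaussian decay of $\rho$, dominated convergence gives continuity of $a\mapsto E[w_a](s)$, and adding the $a$-independent term $C_1 s^{-\gamma}$ preserves it.
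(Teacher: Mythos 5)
Your treatment of \eqref{jojo1} and \eqref{L2} follows the paper's route and is essentially correct, and your subsequence argument for \eqref{JO} (extract $s_j$ with $\|w_s(s_j)\|_{L^2_\rho}\to0$, test \eqref{VraieE} with $w$, conclude $E[w](s_j)=\beta(\tfrac12-\tfrac1{p+1})\int|w|^{p+1}\rho+o(1)\ge o(1)$) is a valid alternative to what the paper does. Two remarks on \eqref{jojo1}: the boundary terms do \emph{not} vanish --- $w=0$ on $\partial D(s)$ kills the surface term coming from $\int|w|^q\rho$, but the $|\nabla w|^2$ surface term survives; the paper combines it with the boundary term from the integration by parts of $\int\nabla w\cdot\nabla w_s\,\rho$ using the relation $w_s=-\tfrac{y}{2}\cdot\nabla w$ on $\partial D(s)$, and the resulting term is only $O\bigl(\exp(-\tfrac18\delta^2e^s)\bigr)$, which must also be absorbed into $\gamma C_1s^{-\alpha-1/2}$. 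This is harmless but it is precisely where the $\delta$-dependence of $C_1$ enters, which your "the boundary integral vanishes" hides.

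The genuine gap is \eqref{L1}. Neither of your two sketches works. The Jensen/H\"older route goes the wrong way: from $G\ge \tfrac{\beta}{2}\int w^2\rho-\tfrac{\beta}{p+1}\int|w|^{p+1}\rho$ you would need an \emph{upper} bound on $\int|w|^{p+1}\rho$ by something small, but Jensen only gives the lower bound $\int|w|^{p+1}\rho\ge c(n,p)\bigl(\int w^2\rho\bigr)^{(p+1)/2}$, and the crude bound $\int|w|^{p+1}\rho\le M^{p-1}\int w^2\rho$ leads to $\tfrac{\beta}{2}\Psi\le G+C\Psi$, which is useless. The $\lambda$-rescaling/discriminant route fails because $\lambda w$ is not a (rescaled) solution, so \eqref{JO} tells you nothing about $E[\lambda w]$ and there is no polynomial in $\lambda$ with a known sign to optimize. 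The argument actually used (both in the paper and in the reference you cite) is dynamical: set $\Psi(s)=\int_{D(s)}w^2\rho$, multiply \eqref{VraieE} by $w\rho$ and integrate to get
\begin{equation*}
\tfrac12\Psi'(s)=-2E[w](s)+\tfrac{1}{p+1}\int_{D(s)}|w|^{p+1}\rho+\int_{D(s)}wH\rho ,
\end{equation*}
then use Jensen (in the correct direction) on the middle term and the bound $2M\int|H|\rho\le C_1s^{-\gamma}$ --- this is the \emph{second} role of the correction term $C_1s^{-\gamma}$, which your proof never uses --- to obtain $\Psi'(s)\ge-4G[w](s)+C(n,p)\Psi^{(p+1)/2}(s)$. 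Since $G$ is nonincreasing and $\Psi$ is globally bounded, the inequality $C(n,p)\Psi^{(p+1)/2}(s_1)>4G[w](s_1)$ at any $s_1$ would force $\Psi$ to blow up in finite time; hence $C(n,p)\Psi^{(p+1)/2}\le 4G[w]$ everywhere, which is \eqref{L1}, and \eqref{JO} drops out of the same inequality. You should replace your sketch of \eqref{L1} by this argument.
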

\begin{proof}
 We first check the required regularity of $w$.
 For all $0<\eps<\tau< T$, denoting $Q_{\eps,\tau}:=\Omega\times(\eps,\tau)$, it follows from standard parabolic $L^q$ and Schauder regularity  
 (see, e.g., \cite[Section~48.1]{quittner2019superlinear}) that 
 \be{utbound}
 u_t,D^2u\in BC(\bar Q_{\eps,\tau})
 \ee
 (where $BC$ denotes the set of bounded continuous functions).
 Then, setting $g:=f'(u)u_t\in L^\infty(Q_{\eps,\tau})$,
  $v:=u_t$ is the solution of the problem $v_t-\Delta v=g$ in $Q_{\eps,\tau}$ with $v=0$ on $\partial\Omega\times(\eps,\tau)$
  and $v(\eps)=u_t(\eps)$. By parabolic $L^q$ regularity, we deduce that, for all $1<q<\infty$ and $0<\eps<\tau< T$,
  $$\sup_{x_0\in\Omega} \|v_t\|_{L^q(Q_{\eps,\tau,x_0})}+\|D^2v\|_{L^q(Q_{\eps,\tau,x_0})}<\infty,$$
  where $Q_{\eps,\tau,x_0}=(\Omega\cap B_1(x_0))\times(\eps,\tau)$ (the set $Q_{\eps,\tau,x_0}$ can be replaced by $Q_{\eps,\tau}$
  and the supremum over $x_0$ omitted
   in case $\Omega$ is bounded). By interpolation inequalities it follows that
 \be{nablautbound}
 \nabla u_t\in BC(\bar Q_{\eps,\tau}).
 \ee
 We deduce from \eqref{dam11},  \eqref{dam11b}, \eqref{wysM}, \eqref{DwysM1}, \eqref{utbound} and \eqref{nablautbound} that, for all $s_1\in(s_0,\infty)$, 
 \be{jo2}
D^2w,\ (1+|y|)^{-1}w_s,\ (1+|y|)^{-1}\nabla w_s\in BC(\overline{\mathcal{W}}_{a,s_1}),
\ee
where  $\mathcal{W}_{a,s_1}=\mathcal{W}_{a}\cap(\R^n\times(s_0,s_1))$.
In view of the exponential decay of $\rho$, this guarantees the convergence and the differentiability of the
various integrals and justifies the integrations by parts in the rest of the proof.

\smallskip

	 We next compute the variation of the first part $E[w](s)$ of the energy
	and derive a differential inequality for the weighted $L^2$ norm.
	 This is essentially the same argument as in \cite{giga1989cha,giga1985asymptotically}, 
	 but we give details for completeness and convenience.
	For all $s>s_0$, we have for $q\ge2$ \be{LOOOOOOO}
	\frac{1}{q}\frac{d}{ds}\int_{D(s)}|w|^q\rho dy=\int_{D(s)}\rho w_s|w|^{q-2}wdy+\frac{1}{ 2q}\int_{\partial D(s)}|w|^q\rho 
 (y\cdot\nu)d\sigma,
	\ee
	 where $d\sigma$ denotes the surface measure on $\partial D(s)$ 
	 and $\nu$ the exterior unit normal on $\partial D(s)$.
Since $w=0$ on $\partial D(s)$, the boundary term vanishes, hence
	\be{LDC1303242}
	\displaystyle\frac{1}{q}\frac{d}{ds}\int_{D(s)}|w|^{q}\rho dy=\int_{D(s)}\rho w_s|w|^{q-2}wdy.
	\ee
	 Next, to compute the variation of the term involving $\nabla w$, we integrate by parts to get
	$$\begin{aligned}
	\frac{d}{ds}\int_{D(s)}|\nabla w|^2\rho
	&=2\int_{D(s)} \nabla w_s\cdot(\rho\nabla w)+\frac12\int_{\partial D(s)}|\nabla w|^2\rho(y\cdot\nu)d\sigma\\
	&=-2\int_{D(s)} w_s\nabla\cdot(\rho\nabla w)+2\int_{\partial D(s)}\rho w_s(\nabla w\cdot\nu) d\sigma+\frac12\int_{\partial D(s)}
	\rho|\nabla w|^2(y\cdot\nu)d\sigma,
	\end{aligned}$$
Using  that, on $\partial D(s)$, we have
	$w_s=-\frac{y}{2}\cdot\nabla w$ 
	(owing to \eqref{dam11b}) 
	and $\nabla w=(\nabla w\cdot\nu)\nu$, hence
	$w_s(\nabla w\cdot\nu)=-\frac12|\nabla w|^2(y\cdot\nu)$, this yields
		\be{LDC1303242b}
		\frac{d}{ds}\int_{D(s)}|\nabla w|^2\rho=-2\int_{D(s)} w_s\nabla\cdot(\rho\nabla w)
	-\frac12\int_{\partial D(s)}\rho|\nabla w|^2(y\cdot\nu)d\sigma.
\ee
			Also, we have $\partial D(s)=e^{s/2}\partial(\Omega-a)$ hence $\int_{\partial D(s)}d\sigma\le C(\Omega)e^{s/2}$ and, since $a\in\Omega_{\delta}$, 
			we have $|y|\ge {\delta}e^{s/2}$ on $\partial D(s)$. 
			Consequently, using \eqref{DwysM1} { and $|y|\rho \le Ce^{-|y|^2/8}$}, we get
			$$\int_{\partial D(s)}\rho|\nabla w|^2|y\cdot\nu|d\sigma=CM_1^2{\exp\bigl[-(\delta e^{s/2})^2/8\bigr]}\le C\exp\bigl(-{\textstyle\frac18\delta^2}e^s\bigr).$$ 
	Combining this with \eqref{VraieE}, \eqref{LDC1303242}, \eqref{LDC1303242b} 
	we obtain
	\begin{align}
	\frac{d}{ds}E[w](s)&=-\int_{ D(s)}w_s\Bigl(\nabla\cdot(\rho\nabla w)-\beta\rho w+\beta\rho |w|^{p-1}w\Bigl)
	\strut-\frac12\int_{\partial D(s)}\rho|\nabla w|^2(y\cdot\nu)d\sigma\notag\\
	&\le-\int_{D(s)} w_s^2\rho+\int_{D(s)}w_sH(s,y)\rho \strut+C\exp\bigl(-{\textstyle\frac18\delta^2}e^s\bigr). \label{ddEws} 
\end{align}	
	On the other hand, setting $\Psi(s):=\int_{\mathbb{R}^n}w^2\rho dy$ and using 
	 \eqref{LOOOOOOO} with $q=2$,  \eqref{VraieE} and  integration by parts, we get
	$$\begin{aligned}
	\frac12\Psi'(s)&=\int_{D(s)}\rho w_swdy=\int_{ D(s)}\Big(\nabla\cdot(\rho\nabla w)+\beta\rho |w|^{p-1}w-\beta \rho w+\rho H(s,y)\Big)w\\
	&=\int_{ D(s)}\big(-|\nabla w|^2+\beta|w|^{p+1}-\beta w^2\big)\rho+\int_{ D(s)} w H(s,y)\rho dy \\
		&= -2E[w](s)+\frac{1}{p+1}\int_{D(s)}|w|^{p+1}\rho dy+\int_{D(s)} wH(s,y)\rho dy
			\end{aligned}	$$
	hence, by Jensen's inequality,
		\be{ddpsi}
		\Psi'(s)\ge  -4E[w](s) \strut-2M\int_{D(s)} |H(s,y)|\rho dy+C(n,p)\Psi^{(p+1)/2}(s).
		\ee
		
			We shall now make use of the key decay property in Lemma~\ref{lem} to handle the terms involving the perturbation $H$. By \eqref{ddEws}, we have
\be{ddEws2}
\frac{d}{ds}E[w](s)\le -\frac{1}{2}\int_{ D(s)}w_s^2\rho+\frac{1}{2}\int_{D(s)}H^2(s,y) \rho 
	\strut+C \exp\bigl(-{\textstyle\frac18\delta^2}e^s\bigr) 
	\ee
	and \eqref{decayH} and $\alpha>\frac12$ guarantee that, for some { $C_1>0$ depending on $\delta$ but not on $a\in \Omega_\delta$,} 
	\be{ddpsi2}
	\frac{1}{2}\int_{D(s)}H^2(s,y)\rho dy+C\exp\bigl(-{\textstyle\frac18\delta^2}e^s\bigr) 
	\le \gamma C_1 s^{-\alpha-\frac12}
	\quad\hbox{and}\quad
	2M\displaystyle\int_{D(s)} |H(s,y)|\rho dy\le C_1s^{-\gamma},
	\ee
	for all $s\ge s_0$.
It then follows from  \eqref{LO2}, \eqref{ddpsi} and \eqref{ddEws2} that
$$\frac{d}{ds}G[w](s)=\frac{d}{ds}E[w](s)-\gamma C_1 s^{-\alpha-\frac12}\le -\frac{1}{2}\int_{ D(s)}w_s^2\rho,$$
	i.e.~\eqref{jojo1}, and
$$\Psi'(s)\ge -4G[w](s)+C(n,p)\Psi^{(p+1)/2}(s).$$
 The latter, combined with \eqref{jojo1}, implies
$$\Psi'(s)\ge-4 G[w](s_1)+C(n,p)\Psi^{(p+1)/2}(s),\quad s\ge s_1\ge s_0,$$
 This guarantees \eqref{JO} and \eqref{L1}, since otherwise $\Psi$ has to blow up in finite time. 
 
 \smallskip
 
  Finally, changing variables to write  
$$ \begin{aligned}
E[w_a(s)]
&=e^{s/2}\int_{\Omega}\bigg(e^{-s/2}\Bigl|\frac{\nabla u(x,T-e^{-s})}{\psi_1(s)}\Bigl|^2\\
&\qquad\qquad +\frac{\beta}{2}\Bigl|\frac{u(x,T-e^{-s})}{\psi_1(s)}\Bigl|^2-\frac{\beta}{p+1}\Bigl|\frac{u(x,T-e^{-s})}{\psi_1(s)}\Bigl|^{p+1}\bigg)\rho((x-a)e^{s/2})dx,
\end{aligned}$$
property \eqref{L2} follows from \eqref{wysM}, {\eqref{DwysM1}} 
and dominated convergence.
\end{proof}

\subsection{Convergence}\label{ssss}

 Denote the set of bounded steady-states by
$$\quad\mathcal{S}=\bigl\{z\in C^2\cap L^\infty(\mathbb{R}^n);\ z \text{ is a solution of } \eqref{dam39}\bigr\}.$$
We first recall the classification result from \cite{giga1985asymptotically}.

\begin{prop}\label{lem2}
	 If $1<p\le p_S$, then $\mathcal{S}=\{0,1,-1\}$.
	\end{prop}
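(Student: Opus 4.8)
The plan is to prove the Liouville-type classification $\mathcal{S}=\{0,1,-1\}$ for bounded solutions of the stationary equation \eqref{dam39} in the Sobolev subcritical range. This is exactly the statement established by Giga and Kohn, so at the level of this paper I would essentially recall and assemble their argument. The starting point is to associate to \eqref{dam39} the natural weighted energy
$$ \mathcal{E}(z):=\int_{\mathbb{R}^n}\Bigl(\tfrac12|\nabla z|^2+\tfrac{\beta}{2}z^2-\tfrac{\beta}{p+1}|z|^{p+1}\Bigr)\rho\,dy, $$
whose Euler--Lagrange equation is precisely \eqref{dam39}, and to observe that a bounded steady state is automatically smooth (elliptic regularity) and has all the needed decay for the weighted integrals to make sense (using the drift term $-\tfrac12 y\cdot\nabla z$, which forces exponential-type decay of $z$ and $\nabla z$ in the Gaussian-weighted spaces).

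The core of the proof is a Pohozaev-type (or Rellich--Nečas) identity adapted to the Gaussian weight $\rho$. First I would multiply \eqref{dam39} by $z\rho$ and integrate to get the relation
$$ \int_{\mathbb{R}^n}|\nabla z|^2\rho = \int_{\mathbb{R}^n}\bigl(\beta|z|^{p+1}-\beta z^2\bigr)\rho. $$
Then I would multiply \eqref{dam39} by $(y\cdot\nabla z)\rho$ and integrate by parts; the homogeneity of the weight together with the subcriticality $p<p_S$ produces, after combining with the first identity, an inequality of the form $c\int_{\mathbb{R}^n}|z|^{p+1}\rho\le 0$ with a strictly positive constant $c=c(n,p)$ (this is where $p<p_S$ enters: the exponent $\tfrac{2}{p-1}$ beats $\tfrac{n}{2}$, making the relevant coefficient have the right sign). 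Hence $z\equiv 0$ unless the extra degenerate term vanishes, and a careful bookkeeping of all boundary-at-infinity terms (which vanish by the Gaussian decay) shows that either $z\equiv 0$ or $|\nabla z|\equiv 0$, i.e. $z$ is a constant. Plugging a constant $c$ into \eqref{dam39} gives $\beta c(|c|^{p-1}-1)=0$, so $c\in\{0,1,-1\}$; conversely these three constants are clearly in $\mathcal{S}$.

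The main obstacle is making the weighted Pohozaev computation fully rigorous: one must justify every integration by parts over $\mathbb{R}^n$ and show that all the boundary terms ``at infinity'' genuinely disappear. This requires an a priori decay estimate on bounded solutions of \eqref{dam39}, typically obtained by a bootstrap using the equation written as $\mathcal{L}z=\beta(|z|^{p-1}-1)z$ with $\mathcal{L}=\Delta-\tfrac12 y\cdot\nabla$ the Ornstein--Uhlenbeck operator, whose semigroup has strong smoothing and decay properties in Gaussian-weighted spaces; alternatively one truncates with cutoffs $\chi(y/R)$ and lets $R\to\infty$, controlling error terms by the weight. Since the paper only needs the statement and Giga--Kohn's proof is classical, I would present this as a short recollection: state the energy identities, indicate the sign of the critical constant under $p<p_S$, invoke the decay needed to kill boundary terms, and conclude. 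I do not expect any genuinely new difficulty here beyond what is already in \cite{giga1985asymptotically}; the only care needed is that our normalization ($\beta|z|^{p-1}z-\beta z$ rather than $|z|^{p-1}z$) matches theirs after the trivial rescaling $z\mapsto \kappa^{-1}z$ with $\kappa=\beta^\beta$, which reduces \eqref{dam39} to the standard form treated in the reference.
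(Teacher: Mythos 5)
The paper offers no proof of this proposition: it is stated as a recollection of the classification in \cite{giga1985asymptotically}, and your sketch of the Gaussian-weighted Pohozaev argument is precisely the argument of that reference, so you are taking the same route. Two small corrections, though. First, the statement covers $p=p_S$ as well: in the critical case the coefficient of $\int|\nabla z|^2\rho$ in the Pohozaev identity vanishes, and one concludes $\nabla z\equiv 0$ from the remaining term $\int|y|^2|\nabla z|^2\rho$, whose coefficient stays positive; your phrasing, which makes subcriticality $p<p_S$ responsible for the sign, glosses over the borderline case that the proposition (and Theorem~\ref{progen}) explicitly includes. Second, the normalization map goes the other way: since constants $c$ in \eqref{dam39} satisfy $|c|^{p-1}=1$ while the Giga--Kohn normalization has constants $\pm\kappa$ with $\kappa=\beta^\beta$, one passes from \eqref{dam39} to their form by $z\mapsto \kappa z$ (equivalently substituting $z=\kappa^{-1}v$), not $z\mapsto\kappa^{-1}z$.
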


 The next lemma shows that $w$ converges to $\mathcal{S}$ as $s\to \infty$.

\begin{lem}\label{jo3}
 Under the assumptions of Theorem~\ref{progen}, for any $a \in \Omega$, there exists $\ell\in\{0,1,-1\}$ such that 
$$
	\lim_{s\to\infty}w_a(y,s)=\ell,
$$
uniformly for $|y|$ bounded.
Moreover, we have $\lim_{s\to\infty} G(w_a(s))=0$ if $\ell=0$ and 
$\lim_{s\to\infty} G(w_a(s))=\eta(n,p)>0$ otherwise.
\end{lem}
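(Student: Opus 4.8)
The plan is to run the standard dynamical-systems/$\omega$-limit-set argument of \cite{giga1985asymptotically}, adapted to the perturbed energy $G$. First I would fix $a\in\Omega$ (choosing $\delta>0$ with $a\in\Omega_\delta$, so that Lemma \ref{ell7} applies) and exploit the fact that, by \eqref{jojo1}, $G[w](s)$ is nonincreasing and, by \eqref{JO}, bounded below by $0$; hence $G[w](s)\downarrow G_\infty\ge 0$ as $s\to\infty$, and integrating \eqref{jojo1} gives $\int_{s_0}^\infty\!\!\int_{D(s)}w_s^2\rho\,dy\,ds<\infty$. Combined with the uniform bounds \eqref{wysM}, \eqref{DwysM1} and the interior parabolic estimates \eqref{jo2} on $w=w_a$, this lets me extract, along any sequence $s_j\to\infty$, a subsequential limit (in $C^2_{loc}$, via Arzel\`a--Ascoli applied to the translates $w(\cdot,\cdot+s_j)$) of the form $w(y,s+s_j)\to W(y,s)$, where $W$ solves the limiting autonomous equation — note that by Lemma \ref{lem}, $H(s,\cdot)\to 0$ in $L^\infty_{loc}$, and the slowly-varying factor $L(\psi_1|w|)/L(\psi_1)\to 1$, so the limiting equation is exactly \eqref{dam39}'s parabolic analogue $W_s-\Delta W+\tfrac12 y\cdot\nabla W=\beta|W|^{p-1}W-\beta W$ on $\R^n$. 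Since $\int w_s^2\rho\to 0$ along the flow, $W$ is $s$-independent, hence $W\in\mathcal S$, and by Proposition \ref{lem2}, $W\in\{0,1,-1\}$.

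Next I would upgrade subsequential convergence to full convergence. The key point is that the energy level of the limit is forced: by \eqref{L2}-type continuity of the energy along the limiting orbit and the fact that $G[W]$ equals the (unperturbed) Giga--Kohn energy $E$ evaluated at a steady state, $E[0]=0$ while $E[\pm 1]=\eta(n,p)>0$ is a fixed positive constant (computed explicitly from \eqref{dam39} as in \cite{giga1985asymptotically}). Since $G[w](s)\to G_\infty$, every subsequential limit $W$ has the same energy $G_\infty$; therefore either $G_\infty=0$ and every limit is $0$, or $G_\infty=\eta>0$ and every limit is $\pm 1$. In the first case $\|w(\cdot,s)\|_{L^2_\rho}\to 0$ by \eqref{L1}, and then $w(\cdot,s)\to 0$ in $C^1_{loc}$ by the uniform gradient bound and interpolation; so $\ell=0$. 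In the second case, the connectedness of the $\omega$-limit set (which follows from the continuity in $s$ of $s\mapsto w(\cdot,s)$ in $C_{loc}$ together with the uniform modulus of continuity from \eqref{jo2}) together with the fact that $\omega$-limit set $\subseteq\{1,-1\}$ — a disconnected set — forces $\omega(w)$ to be a single point, so $w(y,s)\to\ell\in\{1,-1\}$ uniformly for $|y|$ bounded. The "Moreover" statement then records $\lim_{s\to\infty}G[w](s)=G_\infty$, which is $0$ if $\ell=0$ and $\eta(n,p)$ otherwise; since $G[w]=E[w]+C_1 s^{-\gamma}$ and $s^{-\gamma}\to 0$, this is the same as the limit of $E[w]$.

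The main obstacle I expect is making the passage to the limit rigorous despite the nonautonomous terms: one must check that the extra factor $h(s)$ and the ratio $L(\psi_1|w|)/L(\psi_1)$ converge to their limits ($\beta$ and $1$ respectively) locally uniformly in $(y,s)$ along the shifted sequences, not merely pointwise — but this is exactly what \eqref{claimhbeta} and the estimates in the proof of Lemma \ref{lem} provide, uniformly in $a$ and in bounded regions of $y$. A second, more technical point is the connectedness of the $\omega$-limit set in the $s$-shifted dynamics; here one uses that $s\mapsto w(\cdot,s)$ is, by \eqref{jo2}, uniformly continuous into $C_{loc}(\R^n)$ on $[s_0,\infty)$, so the orbit cannot "jump" between the isolated points $1$ and $-1$ — a standard argument, e.g. as in \cite[Section~23.4]{quittner2019superlinear}. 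Everything else is routine bookkeeping with the Gaussian-weighted integrals, justified by the decay of $\rho$ and the bounds \eqref{jo2}.
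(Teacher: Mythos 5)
Your proposal is correct and follows essentially the same route as the paper: monotone bounded energy $G$, extraction of subsequential limits of the time-translates solving the autonomous limit equation (via Lemma~\ref{lem}), the dissipation estimate $\int\int w_s^2\rho<\infty$ forcing these limits to be steady states, the classification of Proposition~\ref{lem2}, and a connectedness/energy-level argument yielding a unique limit together with the value of $\lim_{s\to\infty}G[w](s)$. The only cosmetic difference is the compactness mechanism: the paper extracts limits by weak $W^{2,1;q}_{loc}$ compactness plus strong $C^{\alpha,\alpha/2}_{loc}$ convergence and then upgrades by Schauder, rather than Arzel\`a--Ascoli in $C^2_{loc}$ directly.
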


\begin{proof}
 By \eqref{jojo1}-\eqref{JO}, we have
\be{ellGw}
  l:=\lim_{s\to\infty} G[w](s)\in [0,\infty).
\ee
Pick any sequence $s_j\to \infty$ and set $z_j:=w(y,s+s_j)$.
It follows from \eqref{wysM}, \eqref{DwysM1}, \eqref{VraieE0}, \eqref{decayH}
 and parabolic estimates that
the sequence $(z_j)_j$ is  
{ bounded in $W^{2,1;q}_{loc}(Q)$ for any $1<q<\infty$, where $Q=\mathbb{R}^n\times[0,1]$.}
Consequently, there exists a subsequence (still denoted $s_j$) and  a function $z$
such that $w(\cdot,\cdot+s_j)\to z$ { weakly in $W^{2,1;q}_{loc}(Q)$ and strongly in $C^{\alpha,\alpha/2}_{loc}(Q)$ for any $\alpha\in(0,1)$.} 
 In view of \eqref{decayH} in Lemma~\ref{lem}, it follows that $z$ is a { strong} solution of
\be{limeqn}
 { \partial_sz-\Delta z-\frac{1}{2}y\cdot\nabla z-\beta z=\beta|z|^{p-1}z\quad \text{in }\  Q.}
\ee
{ By Schauder parabolic regularity, $z$ is then a classical solution of \eqref{limeqn}.} 
Moreover $z$ and $|\nabla z|$ are bounded in $\mathbb{R}^n\times[0,1]$. Let $R>0$. 
 Using \eqref{jojo1}, \eqref{ellGw} and the fact that $B_R\subset D(s)$ for all sufficiently large $s$, we deduce that
 \begin{equation*}
	\int_{0}^{1}\int_{B_{R}}\big(\partial_s z_j\big)^2\rho dy ds\le 
	 \int_{s_j}^{s_j+1}\int_{D(s)}\big(\partial_s w\big)^2\rho dyds
	\le 2G[w](s_j)-2G[w](s_j+1)\to 0,\quad j\to \infty.
	\end{equation*}
By Fatou's lemma,  it follows that $\partial_s z=0$ in $B_R$ and, since $R>0$ is arbitrary, 
 we deduce from \eqref{limeqn} that
$z\in \mathcal{S}$.  Since $\mathcal{S}$ is discrete,
the first assertion follows from an immediate connectedness argument. For the second assertion, we assume that $w(\cdot,s_j)\to 0$ or $\pm1$ uniformly for $|y|$ bounded. Using dominated convergence theorem  and \eqref{wysM}, \eqref{DwysM1}, we have $G[w(s_j)]\to 0$ if $w(s_j)\to 0$ and when $w(s_j)\to \pm 1$ we have
\begin{equation*}
	G[w(s_j)]\underset{j\to \infty}{\longrightarrow}\bigg(\int_{\mathbb{R}^n}\rho dy\bigg)\bigg(\frac{\beta}{2}-\frac{\beta}{p+1}\bigg)=\frac{(4\pi)^{n/2}}{2(p+1)}=:\eta(n,p)>0.
	\end{equation*}
	The assertion then follows from the  monotonicity of $G[w](s)$.
\end{proof}

\subsection{Nondegeneracy of blow-up and  proof of Theorems~\ref{RDT} and \ref{progen}} \label{sssss}

 In this paragraph, we shall complete the proof of Theorems~\ref{RDT} and \ref{progen} by showing that $0\notin \omega(w_a)$ if $a$ is a blowup point of $u$, 
 \begin{lem}\label{LOO}
   Under the assumptions of Theorem~\ref{progen},
	if $0\in \omega(w_a)$, then $a$ is not a blow-up point.
\end{lem}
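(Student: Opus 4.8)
The plan is to argue by contradiction: assume $0\in\omega(w_a)$ but that $a$ \emph{is} a blow-up point, and derive a contradiction from the removable-singularity Proposition~\ref{LDC}. By Lemma~\ref{jo3}, $0\in\omega(w_a)$ means that the limit exponent $\ell$ there equals $0$, so $w_a(\cdot,s)\to 0$ uniformly on compact sets and, what we shall actually use, $G[w_a](s)\to 0$ as $s\to\infty$. The goal is to upgrade this to the estimate $|u|\le\eps_0\psi$ on a full cylinder $B(a,\delta)\times(T-\delta^2,T)$, where $\eps_0=\eps_0(n,f)$ and $\delta_0=\delta_0(n,f)$ are the constants from Proposition~\ref{LDC} and $\delta\in(0,\delta_0]$; Proposition~\ref{LDC} then forces $u$ to be bounded near $(a,T)$, contradicting that $a$ is a blow-up point. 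Note that \eqref{sem} ensures hypothesis \eqref{hypgq} of Proposition~\ref{LDC}, since for any $q\in(1,p)$ the function $z^{-q}f(z)=z^{p-q}L(z)$ is eventually non-decreasing (cf.~Remark~\ref{rempropo}).

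First I would propagate the energy decay to a whole spatial neighbourhood of $a$, \emph{uniformly in the base point}. Since $a\in\Omega$, fix $r>0$ with $\overline{B(a,r)}\subset\Omega$ and pick $s_1>s_0$ with $G[w_a](s_1)<\eta(n,p)$; shrinking $r$ and using the continuity property \eqref{L2}, we may also assume $G[w_b](s_1)<\eta(n,p)$ for all $b\in\overline{B(a,r)}$. By the monotonicity \eqref{jojo1}, then $G[w_b](s)<\eta(n,p)$ for all $s\ge s_1$ and all such $b$, and the dichotomy in Lemma~\ref{jo3} (the limit of $G[w_b](s)$ is either $0$ or $\eta(n,p)$) forces $\lim_{s\to\infty}G[w_b](s)=0$ for every $b\in\overline{B(a,r)}$. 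The functions $b\mapsto G[w_b](s_1+k)$, $k\in\N$, are continuous on the compact set $\overline{B(a,r)}$ by \eqref{L2}, non-increasing in $k$ by \eqref{jojo1}, and tend to $0$ pointwise; Dini's theorem (or a direct compactness argument) therefore yields $\sup_{b\in\overline{B(a,r)}}G[w_b](s)\to 0$ as $s\to\infty$.

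Next I would turn this into uniform smallness of the rescaled solutions. By \eqref{L1} together with \eqref{jojo1}, $\sup_{b\in\overline{B(a,r)}}\,\sup_{\sigma\ge s}\int_{D_b(\sigma)}w_b^2\rho\,dy\to 0$ as $s\to\infty$, where $D_b(\sigma):=e^{\sigma/2}(\Omega-b)$; combined with the uniform bounds \eqref{wysM}--\eqref{DwysM1} and with $\|H(s,\cdot)\|_\infty\to0$ from \eqref{decayH}, interior parabolic estimates for equation \eqref{VraieE0} on unit time-cylinders give $\sup_{b\in\overline{B(a,r)}}|w_b(0,s)|\to 0$ as $s\to\infty$. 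Since $u(x,t)=\psi(t)\,w_x\bigl(0,-\log(T-t)\bigr)$ whenever $x\in\overline{B(a,r)}$, it follows that for every $\eps>0$ there is $t_\eps\in(0,T)$ with $|u(x,t)|\le\eps\,\psi(t)$ for all $x\in\overline{B(a,r)}$ and $t\in(t_\eps,T)$. Taking $\eps=\eps_0$ and $\delta:=\min\bigl(r,\sqrt{T-t_{\eps_0}},\delta_0\bigr)$ yields $|u|\le\eps_0\psi$ on $B(a,\delta)\times(T-\delta^2,T)$, and Proposition~\ref{LDC} closes the contradiction; hence $0\notin\omega(w_a)$.

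The main obstacle is the second step: converting the single-point fact $G[w_a](s)\to0$ into a statement uniform over a whole neighbourhood of $a$. The only leverage available is the interplay between the continuity of the perturbed energy in the base point (\eqref{L2}), its monotonicity in $s$ (\eqref{jojo1}), and the fact that the energy has only the two possible limiting values $0$ and $\eta(n,p)$ (Lemma~\ref{jo3}); the Dini-type argument is what welds these together. This uniformity is exactly what lets the naturally paraboloid-shaped smallness region $\{\,|x-a|\lesssim\sqrt{T-t}\,\}$ be thickened into the genuine cylinder required by the removable-singularity Proposition~\ref{LDC}.
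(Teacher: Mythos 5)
Your proof is correct, and its skeleton coincides with the paper's: use Lemma~\ref{jo3} to get $G[w_a](s)\to 0$, spread smallness of the energy to nearby base points $b$ via the continuity property \eqref{L2}, convert this into $|w_b(0,s)|\le\eps_0$ for all later times, and conclude with the removable-singularity Proposition~\ref{LDC} (whose hypothesis you correctly check via Remark~\ref{rempropo}). The difference lies in how the central quantitative step is executed. The paper uses the pointwise interpolation inequality \eqref{L19} together with the uniform gradient bound \eqref{DwysM1} and \eqref{L1}: smallness of $G[w_b]$ at a \emph{single} time $s_1$ already yields $|w_b(0,s)|\le\eps_0$ for all $s\ge s_1$, by monotonicity \eqref{jojo1}. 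You instead first upgrade the pointwise decay $G[w_b](s)\to0$ to \emph{uniform} decay over $\overline{B(a,r)}$ via the dichotomy in Lemma~\ref{jo3} plus Dini's theorem, and then invoke interior parabolic estimates. Your route works but is heavier: the Dini/dichotomy step is not needed once one has the quantitative implication ``small energy at one time $\Rightarrow$ small $|w_b(0,\cdot)|$ forever after,'' and your appeal to parabolic regularity requires a little care (the drift $\tfrac12 y\cdot\nabla w$ must be kept on the left-hand side, since $\nabla w$ is bounded but not small; alternatively, the gradient bound \eqref{DwysM1} plus the elementary interpolation \eqref{L19} on a single time slice replaces the parabolic estimate entirely). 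Both arguments are valid; the paper's is more economical and makes the $\eps_1$-threshold explicit, which is then reused verbatim in the proof of Theorem~\ref{ell50}.
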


\begin{proof}
 Let $\theta=1/(n+2)$ if $n\ge 2$ and $\theta=1/2$ if $n=1$.
We shall use the interpolation inequality  (cf.~\cite[p.250]{quittner2019superlinear}):
\be{L19}
|v(0)|\le C(n)\Big[\|v\|_{L^2(B_1)}^\theta\|\nabla v\|_{L^\infty(B_1)}^{1-\theta}+\|v\|_{L^2(B_1)}\Big],\quad v\in C^1(\bar{B_1}).
\ee
 Set $d=\frac12 {\rm dist}(a,\partial\Omega)$ and $\bar s_0=\max(s_0,-2\log d)$. Let $b\in B_d(a)$. 
For all $s>\bar s_0$, since ${\rm dist}(b,\partial\Omega)\ge d$, we have $B_1\subset D_b(s)$ hence, by \eqref{L1},
\be{L20}
\|w_b(\cdot,s)\|^2_{L^2(B_1)}\le C(n)\int_{D_b(s)} w_b^2\rho dy \le C(n,p)\big(G[w_b](s)\big)^{2/(p+1)},\quad s\ge \bar s_0.
\ee
Combining \eqref{DwysM1}, \eqref{jojo1}, \eqref{L19} and \eqref{L20}, 
we obtain
$$
|w_b (0,s)|\le C(n,p)\bigg[M_1^{1-\theta}\big(G[w_b](s_1)\big)^{\theta/(p+1)}
+\big(G[w_b](s_1)\big)^{1/(p+1)}\bigg], \quad s\ge s_1>\bar s_0.
$$
 Therefore, for $\eps_0=\eps_0(n,f)>0$ given by Proposition \ref{LDC}, there exists $\eps_1=\eps_1(n,f,M_1)>0$ such that
\be{done2}
G[w_b](s_1)\le \eps_1\ \text{for some $s_1>\bar s_0$}\quad\Longrightarrow\quad |w_b(0,s)|\le \eps_0\ \text{for all $s\ge s_1$}.
\ee
Assume that $0\in \omega(w_a)$.  Then, by the second part of Lemma \ref{jo3}, there exists $s_1>\bar s_0$ such that $G[w_a](s_1)<\eps_1$ and, 
by the continuous dependence property \eqref{L2}, there exists $d_1\in(0,d)$ such that $G[w_b(s_1)]<\eps_1$ for all $b\in B_{d_1}(a)$.
It follows from \eqref{done2} that $|w_b(0,s)|\le \eps_0$ for all $b\in B_{d_1}(a)$ and all $s\ge s_1$.
Going back to original variables, we thus have
$|u(b,t)|\le\eps_0 \psi(t)$ for all $(b,t)$ sufficiently close to $(a,T)$. By Proposition \ref{LDC}  we conclude that $a$ is not a blow-up point.
\end{proof}

 \begin{proof}[Proof of Theorem~\ref{progen}]
 Since, by \eqref{dam11}, 
$$\frac{u(a+y\sqrt{T-t},t)}{\psi(t)}= w(y,-\log(T-t)),$$
the result is a direct consequence of Lemmas \ref{jo3} and \ref{LOO}.
\end{proof}

\begin{proof}[Proof of Theorem~\ref{RDT}]
By Theorem~\ref{A1709241}, we know that $u$ is of type~I, i.e.~\eqref{dam1} holds for some $M, \delta>0$.  
Also, when $u_0\ge 0$ and $f$ is only defined for $s\ge 0$ and satisfies \eqref{sem0},
it is easily checked that all the arguments in the proof of Theorem~\ref{progen} remain valid
 (cf.~Propositions~\ref{LDC}-\ref{propo} and Lemmas~\ref{lem}-\ref{LOO}).
Consequently, Theorem \ref{RDT} follows.
\end{proof}

\subsection{Proof of Proposition~\ref{no-needle}}
 We here prove the ``no-needle'' property \eqref{ell23}.
	By a space translation, we may assume $a = 0$. 
	Then,  from Theorem~\ref{progen},  there exists $ T_1\in(0,T)$ such that 
	\be{LDC20243}
	\frac{u(x,t)}{\psi(t)}\ge\frac{1}{2},\quad  T_1\le t<T,\ |x|\le \sqrt{T-t}.
	\ee
	 Fix any $t_0\in [T_1,T)$ and let $\delta=\sqrt{T-t_0}$.
	Denoting by $\varphi$ the first eigenfunction of the negative Dirichlet Laplacian in $B_{1}$ normalized by $\|\varphi\|_\infty=1$ and $\lambda_1>0$ the corresponding eigenvalue, we set $\phi_\delta(x)=\varphi(x/\delta)$ and $\lambda_\delta=\frac{\lambda_1}{\delta^2}$.
		 On the other hand, by assumption~\eqref{sem0} there exists $C=C(f)>0$ such that $f(s)\ge -Cs$ for all $s\ge 0$.
	Let $v$ be solution of \be{LDC20244}
	\begin{cases}\begin{aligned}
	v_t-\Delta v&= -Cv\quad \quad\text{in } B_\delta\times(t_0,\infty),\\
	v(x,t)&=0\quad \quad\text{on } \partial B_\delta\times(t_0,\infty),\\
	v(x,t_0)&=\frac{1}{2}\psi(t_0)\phi_\delta(x)\quad \text{in } B_\delta.
	\end{aligned}
	\end{cases}
	\ee 
	By \eqref{LDC20243} we have $u(\cdot,t_0)\ge \frac{1}{2}\psi(t_0)\ge \frac{1}{2}\psi(t_0)\phi_\delta(x)$ in $B_\delta$. 
	The comparison principle then guarantees that $u\ge v$ in $B_\delta\times [t_0,T)$. We note that the explicit 
	solution of \eqref{LDC20244} is given by $v(x,t)=\frac{1}{2}\psi(t_0)\phi_\delta(x)\exp( -Ct-\lambda_\delta(t-t_0)).$ 
	 Denoting $\eta=\min_{\bar B_{1/2}}\varphi>0$, we obtain
	$$
	u(x,t)\ge \frac{\eta}{2}\psi(t_0)\exp\Bigl(-CT-\frac{\lambda_1}{\delta^2}(t-t_0)\Bigr)
	\ge \frac{\eta}{2}\psi(t_0)\exp(-CT-\lambda_1),\quad |x|\le\frac{\delta}{2},\ t_0\le t<T.
	$$
	Since $t_0$ can be taken arbitrarily close to $T$ and $\lim_{t\to T}\psi(t)=\infty$, property \eqref{ell23} follows.

\section{Extension and proof of Theorem \ref{ell5}} 
\label{proof-thm2}

	   Theorem \ref{ell5} will be a consequence of the following result for possibly sign-changing solutions.
	 
\begin{thm}\label{ell50}
	  Let $1<p\le p_S$.  Assume that
\be{hypell50}
		 \hbox{ $f\in C^1(\R)$ is an odd function, with ${ f>0}$ for large $s>0$,} 
		 \ee and that $f$ satisfies \eqref{sem}. 
Let  $u_0 \in C_0(\R^n)$ satisfy $T<\infty$ and \eqref{dam1}.
		Then the blow-up set of $u$ is compact.  
		 More precisely, there exists $R>0$ such that 
		\be{BUcompact}
		\underset{|x|>R,\, t\in (0,T)}{\sup} |u|<\infty.
		\ee
		 Moreover the conclusion remains valid if $u_0\ge 0$ and, instead of \eqref{hypell50},
		$f$ satisfies \eqref{sem0} and $f(0)=0$.
\end{thm}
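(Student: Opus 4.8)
The plan is to combine the weighted-energy machinery developed for Theorem~\ref{progen} with the extra decay coming from $u_0\in C_0(\R^n)$, so that the energy $G[w_a](s)$ at a fixed scale $s=s_1$ becomes uniformly small once $|a|$ is large. First I would recall (as in the proof of Lemma~\ref{LOO}) that there is an $\eps_1=\eps_1(n,f,M_1)>0$ with the property \eqref{done2}: if $G[w_a](s_1)\le\eps_1$ for some admissible $s_1$, then $|w_a(0,s)|\le\eps_0$ for all $s\ge s_1$, and hence by Proposition~\ref{LDC} the point $a$ is not a blow-up point. So it suffices to produce $R>0$ and a single scale $s_1$ such that $G[w_a](s_1)\le\eps_1$ for all $|a|>R$; in fact, running the same local-boundedness argument on balls, one gets the quantitative bound $\sup_{|x|>R,\,t\in(0,T)}|u|<\infty$ rather than just non-blow-up, which is exactly \eqref{BUcompact}.

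The key step is therefore to show that $G[w_a](s_1)\to 0$ as $|a|\to\infty$, for a suitable fixed $s_1$. Here I would use the change-of-variables formula for $E[w_a](s)$ written at the end of the proof of Lemma~\ref{ell7}: after substituting $x\mapsto a+ye^{-s/2}$, the energy is an integral over $\R^n$ of $\bigl(\tfrac12|\nabla w|^2+\tfrac\beta2 w^2-\tfrac{\beta}{p+1}|w|^{p+1}\bigr)\rho(y)\,dy$ with $w(y,s_1)=u(a+ye^{-s_1/2},T-e^{-s_1})/\psi_1(s_1)$. Since $u(\cdot,t)$ is bounded on $[0,T-e^{-s_1}]$ (no blow-up before $T$) and, more to the point, since $u_0\in C_0(\R^n)$ is preserved by the flow in the sense that $\|u(\cdot,t)\|_{L^\infty(|x|>\rho)}\to 0$ as $\rho\to\infty$ uniformly for $t$ in compact subsets of $[0,T)$ (a standard fact for the heat semigroup plus the local boundedness of $u$ away from $T$, using $f(0)=0$ so that $0$ is a subsolution barrier), the integrand is, for $|a|$ large, pointwise bounded by $C\mathbf 1_{\{|y|>c|a|e^{s_1/2}\}}$ times the Gaussian weight plus a small constant. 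The gradient term is controlled the same way via the smoothing estimate \eqref{pop} of Proposition~\ref{propo} (equivalently \eqref{DwysM1}) combined with the decay of $u$ at spatial infinity. Dominated convergence with the Gaussian weight then yields $\lim_{|a|\to\infty}E[w_a](s_1)=0$, and since $G[w_a](s_1)=E[w_a](s_1)+C_1 s_1^{-\gamma}$ with $C_1$ independent of $a$ (by the uniformity statement in Lemma~\ref{ell7}), one can first fix $s_1$ large so that $C_1 s_1^{-\gamma}<\eps_1/2$, then fix $R$ so that $E[w_a](s_1)<\eps_1/2$ for $|a|>R$.

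I expect the main obstacle to be the uniform spatial decay of $u(\cdot,t)$ for $t$ bounded away from $T$: one must argue that $u_0\in C_0(\R^n)$ together with $f(0)=0$ (resp. $f$ odd with $f(0)=0$) forces $\sup_{0\le t\le T-\tau}\|u(\cdot,t)\|_{L^\infty(|x|>\rho)}\to 0$ as $\rho\to\infty$. For the nonnegative case this follows by comparison with the solution of the linear problem $v_t-\Delta v=Cv$ started from $|u_0|$ (using $|f(s)|\le Cs$ near $0$ and an $L^\infty$ bound away from $T$ to linearize), whose $C_0$-property is preserved by the (bounded, analytic) semigroup on $C_0(\R^n)$; for the sign-changing case the same comparison applies to $|u|$ using oddness. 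Once this decay is in hand, the rest is a routine dominated-convergence estimate against $\rho$, and the passage from ``$a$ is not a blow-up point for all $|a|>R$'' to the quantitative sup-bound \eqref{BUcompact} is obtained by invoking the explicit constant $C$ in the conclusion of Proposition~\ref{LDC}, which depends only on $n,f,\delta$ and $\|u(\cdot,T-\delta^2/2)\|_\infty$, all controlled uniformly in $|a|>R$. This gives Theorem~\ref{ell5} as the special case $u_0\ge 0$, $f(0)=0$.
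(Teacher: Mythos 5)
Your proposal is correct and follows essentially the same route as the paper: fix $s_1$ large so that $C_1s_1^{-\gamma}<\eps_1/2$, show $E[w_a](s_1)\to 0$ as $|a|\to\infty$ by dominated convergence using the spatial decay of $u(\cdot,t_1)$ and $\nabla u(\cdot,t_1)$ (obtained by comparison with the heat semigroup, which is the content of the paper's Lemmas~\ref{gen0}--\ref{propopo}), and then conclude via \eqref{done2} and Proposition~\ref{LDC}. The only imprecision is your appeal to \eqref{pop}/\eqref{DwysM1} for the gradient term -- those are only uniform sup bounds, so the paper instead derives the decay of $|\nabla u(\cdot,t_1)|$ at spatial infinity directly from the variation-of-constants formula and the gradient estimate for the heat kernel, which is what your ``smoothing plus decay of $u$'' remark amounts to.
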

 
Under the assumption $u_0 \in C_0(\R^n)$, we shall show that,
 at a suitable shifted time $s_1$, the weighted energy functional $G_a[w](s_1)$ (cf.~\eqref{LO2})
becomes small enough for large $|a|$.
 Then we can conclude by using the nondegeneracy analysis in section~\ref{sssss}.
The time shift, which is made necessary by the second term $C_1 s^{-\gamma}$ of the energy,
will be handled by means of the following lemma.

\begin{lem}\label{gen0}
	Let $0<t_1<T$. 
	Under assumptions of Theorem \ref{ell50}, with $p>1$, we have 
	$$|\nabla u(x, t_1)| 
	 +|u(x,t_1)| \to 0,\quad \text{as } |x|\to \infty.$$
\end{lem}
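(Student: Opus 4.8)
The plan is to exploit the smoothing effect of the heat semigroup, combined with the type~I bound \eqref{dam1} and the decay of the initial data $u_0\in C_0(\R^n)$, to propagate spatial decay from time $0$ up to the fixed interior time $t_1<T$. First I would write the variation of constants formula $u(t_1)=e^{-t_1A}u_0+\int_0^{t_1}e^{-(t_1-\tau)A}f(u(\tau))\,d\tau$ on $\Omega=\R^n$, where $(e^{-\tau A})_{\tau\ge0}$ is the Gaussian semigroup. The linear term $e^{-t_1A}u_0$ tends to $0$ as $|x|\to\infty$ because $u_0\in C_0(\R^n)$: the heat semigroup maps $C_0(\R^n)$ into itself, which is a standard fact (it follows by approximating $u_0$ uniformly by compactly supported functions and using the explicit Gaussian kernel). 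For its gradient, $\nabla e^{-t_1A}u_0=e^{-(t_1/2)A}\nabla e^{-(t_1/2)A}u_0$ and $\nabla e^{-(t_1/2)A}u_0$ is bounded, so applying $e^{-(t_1/2)A}$ again — which again preserves $C_0$-type decay — gives $\nabla e^{-t_1A}u_0\to 0$ as $|x|\to\infty$.

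Next I would handle the nonlinear Duhamel term. On $[0,t_1]$ the solution $u$ is uniformly bounded (since $t_1<T$ and $u\in C([0,T);L^\infty)$ is bounded away from the blow-up time), so $\|f(u(\tau))\|_\infty\le C$ for $\tau\in[0,t_1]$; but boundedness alone is not enough — I need the integrand to decay in $x$. The point is that $u(\tau,\cdot)$ itself decays at spatial infinity for each $\tau\in[0,t_1]$. This can be obtained by a bootstrap/continuity argument in $\tau$: the set of times $\tau$ for which $u(\tau)\in C_0(\R^n)$ is nonempty (contains $0$), and the Duhamel representation together with the $C_0$-preserving property of $e^{-\sigma A}$ and local Lipschitz continuity of $f$ on bounded sets shows it is both open and closed in $[0,t_1]$. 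Alternatively — and more robustly — one uses a comparison argument: since $|f(s)|\le C(1+|s|^q)$ for bounded $s$, or more simply since $f$ is locally Lipschitz and $u$ is bounded on $[0,t_1]$, one has $|u_t-\Delta u|\le C(\|u\|_{L^\infty(Q_{t_1})})$, and comparing $|u|$ with the solution of the linear problem $v_t-\Delta v=C|u|$... actually the cleanest route is: $|u(x,t)|\le (e^{-tA}|u_0|)(x)+C\int_0^t (e^{-(t-\tau)A}\mathbf 1)(x)\,d\tau$ does not decay because of the constant. So I would instead invoke that $z:=|u|$ satisfies, on $[0,t_1]$, $z_t-\Delta z\le Kz$ for $K=\sup_{[0,t_1]}|f'(u)|$ pointwise (using oddness and $C^1$ regularity to bound $|f(u)|\le K|u|$ near where $u$ is small, together with... ) — the key structural input is $f\in C^1$, $f(0)=0$ for the $C_0$ case, giving $|f(s)|\le K|s|$ for $|s|\le \|u\|_{L^\infty(Q_{t_1})}$. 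Then $w:=e^{Kt}(e^{-tA}|u_0|)$ is a supersolution, so $|u(x,t)|\le e^{Kt_1}(e^{-tA}|u_0|)(x)\to 0$ as $|x|\to\infty$, uniformly in $t\in[0,t_1]$.

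With pointwise spatial decay of $u$ on $[0,t_1]$ in hand, I would return to the Duhamel formula for $\nabla u(t_1)$ and split the time integral at a small $\tau_0>0$: on $(\tau_0,t_1)$ the kernel $\nabla e^{-(t_1-\tau)A}$ has integrable norm $\le C(t_1-\tau)^{-1/2}$ and is applied to $f(u(\tau))$ which is uniformly small in $x$ for $|x|$ large (because $u(\tau)$ is, and $f$ is Lipschitz with $f(0)=0$), so by the $C_0$-preservation the contribution tends to $0$ as $|x|\to\infty$; on $(0,\tau_0)$ the same estimate gives a bound $C\tau_0^{1/2}\sup_{[0,\tau_0]}\|f(u)\|_\infty$ which is small uniformly in $x$ when $\tau_0$ is small, after first noting that $\sup_{[0,\tau_0]}\|f(u)\|_\infty$ is bounded. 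Letting $|x|\to\infty$ and then $\tau_0\to0$ yields $\nabla u(x,t_1)\to 0$. The main obstacle is the second paragraph: establishing that $u(\tau,\cdot)$ decays at spatial infinity for \emph{every} $\tau\in(0,t_1]$, not just at $\tau=0$ — the naive Duhamel bound loses this because of the inhomogeneity; the remedy is the linear differential-inequality comparison $z_t-\Delta z\le Kz$ with $z=|u|$, which is legitimate precisely because $f\in C^1$ with $f(0)=0$ forces $|f(u)|\le K|u|$ on the relevant bounded range, and this is where the hypotheses of Theorem \ref{ell50} (oddness, or $u_0\ge0$ with $f(0)=0$) are used.
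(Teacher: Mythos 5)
Your proposal is correct and essentially reproduces the paper's argument: both rest on the comparison $|u|\le e^{Kt}S(t)|u_0|$ on $[0,t_1]$ (obtained from $|f(s)|\le K|s|$ on the relevant bounded range, using $f(0)=0$ and $f\in C^1$) followed by the Duhamel formula and the pointwise gradient bound for the heat kernel. The one point you assert rather than prove --- that the spatial decay of $S(\tau)|u_0|$ is uniform in $\tau$, which your treatment of the integral over $(\tau_0,t_1)$ also requires --- is exactly the paper's Lemma~\ref{propopo}, a standard two-regime Gaussian estimate, so this is not a genuine gap.
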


  The proof of Lemma \ref{gen0} relies on the following simple decay propagation property for the heat semigroup
$S(t)$ in $\R^n$. 
\begin{lem}\label{propopo}
For all $v\in C_0(\mathbb{R}^n)$, we have
$$\lim_{R\to\infty} \Bigl(\ \sup_{t>0,\ |x|>R} \bigl[S(t)|u_0|\bigr](x)\Bigr)=0.$$
\end{lem}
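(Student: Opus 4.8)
The plan is to reduce the statement to an elementary, $t$-uniform Gaussian tail estimate, using the positivity of $S(t)$ (here I read the left-hand side as $S(t)|v|$). First I would fix $\eps>0$ and, using $v\in C_0(\R^n)$, choose $\rho>0$ so that $|v(y)|\le\eps$ whenever $|y|\ge\rho$; together with $M:=\|v\|_\infty$ this yields the pointwise domination $|v|\le M\,\mathbf 1_{B_\rho}+\eps$ on $\R^n$. Applying $S(t)$ and using its linearity, positivity, and the identity $S(t)1=1$, one gets
\[
\bigl[S(t)|v|\bigr](x)\le M\,\bigl[S(t)\mathbf 1_{B_\rho}\bigr](x)+\eps,\qquad t>0,\ x\in\R^n,
\]
so it suffices to prove that $\sup_{t>0,\ |x|>R}\bigl[S(t)\mathbf 1_{B_\rho}\bigr](x)\to 0$ as $R\to\infty$, for each fixed $\rho>0$.

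Next I would estimate this quantity directly from the Gaussian kernel. For $|x|>2\rho$ and $y\in B_\rho$ we have $|x-y|\ge |x|-\rho$, hence
\[
\bigl[S(t)\mathbf 1_{B_\rho}\bigr](x)=\int_{B_\rho}(4\pi t)^{-n/2}e^{-|x-y|^2/4t}\,dy
\le |B_\rho|\,(4\pi t)^{-n/2}e^{-(|x|-\rho)^2/4t}.
\]
The key point is that the right-hand side is bounded \emph{uniformly in $t>0$}: writing $s=(|x|-\rho)^2/4t$ and using the elementary fact $\sup_{s>0}s^{n/2}e^{-s}<\infty$, one obtains $(4\pi t)^{-n/2}e^{-(|x|-\rho)^2/4t}\le C(n)\,(|x|-\rho)^{-n}$. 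Therefore $\sup_{t>0,\ |x|>R}[S(t)\mathbf 1_{B_\rho}](x)\le C(n)\,|B_\rho|\,(R-\rho)^{-n}\to 0$ as $R\to\infty$, and combining with the previous display gives $\limsup_{R\to\infty}\sup_{t>0,\ |x|>R}[S(t)|v|](x)\le\eps$; letting $\eps\downarrow 0$ finishes the proof.

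There is no serious obstacle here: the argument is entirely elementary. The only thing deserving a word of care is the uniformity in $t$ in the second step — one must notice that the competition between the singular prefactor $t^{-n/2}$ (as $t\to 0$) and the Gaussian decay $e^{-d^2/4t}$ produces a finite maximum of order $d^{-n}$ (attained near $t\sim d^2/n$), which is precisely what yields a bound that is simultaneously small for large $|x|$ and independent of $t$.
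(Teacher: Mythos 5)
Your proof is correct, and it follows the paper's first step exactly (the domination $|v|\le \eps+\|v\|_\infty\mathbf 1_{B_\rho}$ together with positivity of $S(t)$ and $S(t)1=1$), but it diverges in how the remaining term $S(t)\mathbf 1_{B_\rho}$ is controlled uniformly in $t$. The paper splits into two time regimes: for $0<t\le t_0$ it uses the Gaussian tail bound $C_1e^{-C_2R^2/t}$ (which degenerates as $t\to\infty$), and for $t\ge t_0$ it invokes the $L^1\to L^\infty$ smoothing estimate $\|S(t)v_0\|_\infty\le (4\pi t)^{-n/2}\|v_0\|_1$ applied to $v_0=\|v\|_\infty\chi_{B_\rho}\in L^1$; the threshold $t_0$ and then the radius $R_0$ are chosen in that order. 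You instead optimize the kernel bound over $t$ directly via $\sup_{s>0}s^{n/2}e^{-s}<\infty$, obtaining the single $t$-uniform estimate $\sup_{t>0}\,(4\pi t)^{-n/2}e^{-(|x|-\rho)^2/4t}\le C(n)(|x|-\rho)^{-n}$, which collapses the case distinction and moreover yields a quantitative decay rate $O\bigl((R-\rho)^{-n}\bigr)$ for the supremum rather than a mere limit. Both arguments are complete; yours is marginally cleaner, and your parenthetical reading of $S(t)|u_0|$ as $S(t)|v|$ matches the obvious typo in the statement.
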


\begin{proof}[Proof of Lemma \ref{propopo}]
	 Let $v\in C_0(\mathbb{R}^n)$ and $\eps>0$. Fix $R>0$ such that $\underset{|x|\ge R}{\sup} |v(x)|\le \eps$. 
	   We first claim that
	  \begin{equation}\label{ell18}
		\sup_{|x|\ge 2R} |(S(t)v)(x)|\le \varepsilon+C_1\|v\|_\infty e^{-\frac{C_2R^2}{t}},\quad t>0.
		\end{equation}
		   Indeed, we can write
		\begin{align*}
		\bigl|\big(S(t)v\big)(x)\bigl|&=(4\pi t)^{-n/2}\Bigl|\int_{\mathbb{R}^n} e^{-\frac{|x-y|^2}{4t}}v(y)dy\Bigl|\\
		&\le \varepsilon \int_{|y|>R} (4\pi t)^{-n/2}e^{-\frac{|x-y|^2}{4t}}dy+\|v\|_\infty (4\pi t)^{-n/2}\int_{|y|\le R} e^{-\frac{|x-y|^2}{4t}}dy=:\eps J_1+\|v\|_\infty J_2.
		\end{align*}
		   We have $J_1\le1$ and, for $|x|>2R$,  $|y|<R$ implies $|x-y|>R$, so that
$$J_2\le (4\pi t)^{-n/2}\int_{|\xi|> R} e^{-\frac{|\xi|^2}{4t}}d\xi \le \int_{|z|>\frac{R}{\sqrt{t}}}e^{-\frac{|z|^2}{4}}dz\le C_1e^{-C_2R^2/t},$$
		 hence \eqref{ell18}.
		 
		 Next, observing that $|v|\le \varepsilon+\|v\|_\infty \chi_{B_R}$ and $v_0:=\|v\|_\infty \chi_{B_R}\in L^1$, 
		 it follows that, for $t_0=t_0(\eps,v)>0$ large enough, we have
  \be{ell18a}
  |(S(t)v)(x)|\le (S(t)|v|)(x)\le \varepsilon+(4\pi t)^{-n/2}\|v_0\|_1\le 2\eps,\quad x\in\R^n,\ t\ge t_0.
  \ee
   Now taking $R_0=R_0(\eps,v)>2R$ large enough, we deduce  from \eqref{ell18} that
  \begin{equation}\label{1ell18}
|(S(t)v)(x)|\le 2\eps,\quad 0<t\le t_0,\ |x|\ge R_0,
		\end{equation}
		   and the conclusion follows by combining \eqref{ell18a} and \eqref{1ell18}.
\end{proof}

\begin{proof}[Proof of Lemma \ref{gen0}]
By the representation of $S(t)$ by the heat kernel $(4\pi t)^{-n/2}\exp(-|x|^2/(4t))$, we see that
\be{gradkernel}
\bigl|[\nabla_x S(t)v](x)\bigr|\le C(n)t^{-1/2} \bigl[S(2t)|v|\bigr](x),\quad v\in L^\infty(\Omega),\ t>0,\ x\in\R^n.
\ee

Fix $t_1<T$.   Recalling $f(0)=0$, there exists $K\strut =K(t_1)>0$ such that 
$-K|u|\le u_t-\Delta u=f(u)\le K|u|$ in $\mathbb{R}^n\times(0,t_1]$. 
By the comparison principle it follows that
\be{gradkernel2}
|u|\le e^{Kt}S(t)|u_0|\le CS(t)|u_0| \quad\hbox{in $\mathbb{R}^n\times(0,t_1]$.}
\ee
(Here and in  the rest of the proof, $C$ denotes a generic positive constant, possibly depending on $t_1$.)
	Using \eqref{gradkernel}, \eqref{gradkernel2} and the variation of constant formula 
	$u(t_1)=S(t_1)u_0+\int_{0}^{t_1}S(t_1-\tau)f(u(\tau))d\tau$,
	we deduce that,  pointwise in $\R^n$,
	$$\begin{aligned}
	|\nabla  u(t_1)|&\le Ct_1^{-1/2}S(2t_1)|u_0|+C\int_{0}^{t_1}(t_1-\tau)^{-1/2} S(2(t_1-\tau))S(\tau)|u_0|d\tau\nonumber\\
	&=Ct_1^{-1/2}S(2t_1)|u_0|+C\int_{0}^{t_1}(t_1-\tau)^{-1/2} S(2t_1-\tau)|u_0|d\tau
	\le  C\bigl(t_1^{-1/2}+t_1^{1/2}\bigr)\sup_{t\in[0,2t_1]} S(t)|u_0|
	\end{aligned}$$
	hence, using again \eqref{gradkernel2},
	$$|u(t_1)|+|\nabla  u(t_1)|	\le  C\bigl(1+t_1^{-1/2}+t_1^{1/2}\bigr)\sup_{t\in[0,2t_1]} S(t)|u_0|.$$
	This combined with Lemma \ref{propopo} concludes the proof.
\end{proof}

\begin{proof}[Proof of Theorem \ref{ell50}] 
 Recall the definition \eqref{LO2} of the weighted energy:
		\begin{equation*}
	G[w_a(s)]=E[w_a](s)+C_1 s^{-\gamma},\quad s>s_0,
		\end{equation*}
		 for all $a\in\R^n$. Also, for $\eps_0=\eps_0(n,f)>0$ given by Proposition \ref{LDC}, by \eqref{done2}
		 there exists $\eps_1=\eps_1(n,f,M_1)>0$ such that
\be{done2b}
G[w_a](s_1)\le \eps_1\ \text{for some $s_1>s_0$}\quad\Longrightarrow\quad |w_a(0,s)|\le \eps_0\ \text{for all $s\ge s_1$}
\ee
(noticing that $\bar s_0=s_0$ for $\Omega=\R^n$).
Choose $s_1>s_0$ such that $C_1 s_1^{-\gamma}<\eps_1/2$.
Rewriting $E[w_a(s_1)]$ as
$$
	E[w_a(s_1)]=\int_{\mathbb{R}^n}\bigg(\frac{e^{-s_1}|\nabla u(a+e^{-\frac{s_1}{2}}y,t_1)|^2+\beta |u(a+e^{-\frac{s_1}{2}}y,t_1)|^2}{2\psi^2(t_1)}-\frac{\beta|u(a+e^{-\frac{s_1}{2}}y,t_1)|^{p+1}}{\psi^{p+1}(t_1)}\bigg)\rho(y)dy
$$
with $t_1=T-e^{-s_1}$,
and using Lemma \ref{gen0} and dominated convergence, there exists $R>0$ such that,
	for all $a$ such that $|a|\ge R-1$, we have
	$E[w_a](s_1)<\frac{\eps_1}{2}$, hence $G[w_a](s_1)<\eps_1$.
	By \eqref{done2b}, we deduce that
$$ |w_a(0,s)|\le \eps_0,\quad \hbox{for all $s\ge s_1$ and $|a|>R-1$}, $$
i.e.
$$|u(a,t)|\le\eps_0 \psi(t),\quad \hbox{for all $t\in(t_1,T)$ and $|a|>R-1$}. $$
Applying Proposition \ref{LDC} with $\delta=\min(\delta_0,1,\sqrt{T-t_1})$. It follows that
$$\underset{|x|>R,\ t\in (T-\delta^2/2,T)}{\sup} |u|<\infty,$$
hence \eqref{BUcompact}.
 It is easily checked that the final assertion (for $u_0\ge 0$) follows from the same argument.
\end{proof}

\begin{proof}[Proof of Theorem \ref{ell5}]
It follows directly from Theorems~\ref{ell50} and \ref{A1709241}. 
\end{proof}

\section{Appendix: Type-I blowup}

We have used the following result, which in particular guarantees
the type~I blowup estimate~\eqref{dam1} under the assumptions of Theorem~\ref{RDT}
(and the constant $M$ is actually independent of the solution, 
although this fact is not used in our proofs).

\begin{thm}\label{A1709241}
	Let $\Omega$ be a uniformly $C^2$ domain of $\R^n$, $f\in C([0,\infty))$ be positive for $s>0$ large
	and assume that $f$ has regular variation at $\infty$ with index $p\in (1,p_S)$.
	For each $\tau>0$, there exist $M=M(\Omega,f)>0$ and $t_0=t_0(\tau,f)\in(0,\tau)$ such that,
	if $u\ge 0$ is a strong solution of 
$$\begin{cases}
u_t-\Delta u=f(u),&x\in \Omega,\ t_0<t<\tau,\\
u=0,& x\in \partial\Omega,\ t_0<t<\tau,
\end{cases}
$$
then
\be{typeI}
u(x,t)\le MF^{-1} (\tau-t)\quad \hbox{in } \Omega\times [t_0,\tau).
\ee
\end{thm}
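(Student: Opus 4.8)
The plan is to reduce the statement to the universal bound of \cite[Theorem~3.1]{souplet2022universal} (or the closely related arguments), exploiting the fact that regular variation with index $p\in(1,p_S)$ lets us sandwich $f$ between perturbed power nonlinearities on any dyadic range of values, uniformly enough to run a doubling/rescaling argument. First I would record the elementary consequences of regular variation: writing $L(s)=s^{-p}f(s)$, Potter-type bounds give that for every $\eps>0$ there is $s_\eps$ with $s^{p-\eps}\le f(s)\le s^{p+\eps}$ for $s\ge s_\eps$, and moreover $L(\lambda s)/L(\lambda)\to1$ locally uniformly in $s$. Choosing $\eps$ small enough that $p\pm\eps$ both remain in $(1,p_S)$ (possible since $p<p_S$) is the point where Sobolev subcriticality enters, exactly as in the pure-power theory.

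Next I would set up the rescaling. Suppose, for contradiction, that \eqref{typeI} fails: there are solutions $u_k\ge0$ on $\Omega\times(t_0,\tau)$, points $(x_k,t_k)$, and $M_k\to\infty$ with $u_k(x_k,t_k)>M_k F^{-1}(\tau-t_k)$. Using the doubling lemma (cf.~\cite[Lemma~5.1 / Section~26]{quittner2019superlinear}) applied to the quantity $u^{(p-1)/2}$ or to an appropriate metric adapted to the ODE normalization $F^{-1}$, I would produce new points where $u_k$ is comparably large and the ``curvature'' of the bound is controlled, then rescale by $\lambda_k:=u_k(\tilde x_k,\tilde t_k)^{-(p-1)/2}\to0$, setting $v_k(y,s)=\lambda_k^{2/(p-1)}u_k(\tilde x_k+\lambda_k y,\tilde t_k+\lambda_k^2 s)$. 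The regular variation of $f$ guarantees that the rescaled nonlinearities $\lambda_k^{2p/(p-1)}f(\lambda_k^{-2/(p-1)}v)$ converge (locally uniformly in $v>0$) to the pure power $v^p$, because the slowly varying factor $L(\lambda_k^{-2/(p-1)}v)/L(\lambda_k^{-2/(p-1)})\to1$. Passing to the limit yields an entire solution (on $\R^n$ or a half-space) of $v_t-\Delta v=v^p$, bounded, nonnegative, nontrivial, contradicting the Liouville-type theorem of \cite{merle1998optimal,bidaut2001initial} valid for $p<p_S$ — and the boundary case is handled by the standard flattening of $\partial\Omega$ using that $\Omega$ is uniformly $C^2$. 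This contradiction proves the bound; the uniformity of $M$ in the solution is automatic from the compactness argument, and the dependence $t_0=t_0(\tau,f)$ comes from requiring the solution to have existed long enough that the rescaled parabolic cylinders fit inside the domain of definition.

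The main obstacle I anticipate is making the rescaling limit rigorous \emph{uniformly} with the $F^{-1}$ normalization rather than the scale-invariant $(T-t)^{-1/(p-1)}$: one must check that $F^{-1}(\tau-t)$ and $u^{-(p-1)/2}$-type scalings are compatible, i.e.~that the slowly varying corrections do not accumulate along the sequence and that $F^{-1}$ itself is regularly varying of index $-1/(p-1)$ (which follows from Karamata's theorem applied to $F(X)=\int_X^\infty s^{-p}L(s)^{-1}ds$). A secondary technical point is ensuring the doubling argument is run with respect to a metric/functional adapted to $f$ so that the limiting profile is genuinely a solution of the limiting equation and not degenerate; here the monotonicity-type consequences of regular variation (that $s^{-q}f(s)$ is eventually monotone for $q$ near $p$, as already used in Remark \ref{rempropo}) are what make the comparison estimates go through. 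Since the paper states this as an appendix result based on \cite{souplet2022universal}, I would in fact cite that theorem directly for the bulk of the argument and only supply the short reduction showing that its hypotheses (regular variation, $p<p_S$) are exactly \eqref{sem0} together with the index condition, together with the observation that the constant there can be taken independent of the particular solution.
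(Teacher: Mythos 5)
Your proposal ends by deferring ``the bulk of the argument'' to a direct citation of \cite[Theorem~3.1]{souplet2022universal}, but this misses the actual content of the paper's appendix proof. The cited result does \emph{not} give the estimate in the form \eqref{typeI}; it gives it in the form $f(u(x,t))/u(x,t)\le C_0/(\tau-t)$ wherever $u(x,t)\ge 1$ (this is \eqref{typeIa} in the paper). The whole point of the appendix is the conversion of that bound into $u\le MF^{-1}(\tau-t)$. Concretely, one needs a claim of the type $F(s/k)\ge C_0\,s/f(s)$ for some fixed $k>1$ and all large $s$, which the paper proves via the Karamata representation $f(s)=\tau(s)s^p\exp\bigl[\int_1^s\xi(z)z^{-1}dz\bigr]$: choosing $1<m<p<q$ so that $s^{-m}f_0$ is eventually increasing and $s^{-q}f_0$ eventually decreasing yields $F(z)\ge C_3 z/f_0(z)$ and then $F(s/k)\ge C_3C_1k^{m-1}s/f(s)$, so that taking $k$ large absorbs the constant $C_0$. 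Applying this with $s=u(x,t)$ and using that $F$ is a decreasing bijection near infinity gives $u\le kF^{-1}(\tau-t)$. You do mention the essential underlying fact (that $F(X)\sim\beta X/f(X)$ and that $F^{-1}$ is regularly varying of index $-\beta$), but only as a ``technical point'' inside your from-scratch rescaling argument, and you never assemble it into the reduction from \eqref{typeIa} to \eqref{typeI}. As written, your citation-based route therefore has a genuine gap: the theorem you invoke does not state what you need, and the missing bridge is precisely the paper's proof.

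Your primary route --- a doubling/rescaling/Liouville contradiction argument run directly on the quantity $u$ measured against $F^{-1}(\tau-t)$ --- is a different and much heavier path: it essentially re-proves \cite{souplet2022universal} rather than using it, and the difficulties you yourself flag (running the doubling lemma with an $F$-adapted normalization so that the slowly varying corrections do not accumulate, and obtaining a nondegenerate pure-power limit) are exactly the nontrivial parts; you do not resolve them, you only anticipate them. If you want a complete proof along the paper's lines, keep the citation of \cite[Theorem~3.1]{souplet2022universal} for \eqref{typeIa} and add the short Karamata-representation lemma above; that is all that is required.
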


Theorem~\ref{A1709241} was essentially established in \cite[Theorem~3.1]{souplet2022universal}.
We note that the assumptions on~$f$, which are those from \cite{souplet2022universal}, 
are more general than those in Theorem~\ref{RDT} of the present paper (see after \eqref{prop}).
On the other hand, the estimate in \cite{souplet2022universal} is given there in a different form (cf.~\eqref{typeIa} below).
We therefore provide a proof of Theorem~\ref{A1709241}, where we derive \eqref{typeI} as a consequence of \eqref{typeIa}.

\begin{proof} 
Under the assumptions of Theorem~\ref{A1709241}, we know from \cite[Theorem~3.1 and Remark~1(i)]{souplet2022universal} that
\be{typeIa}
\frac{f(u(x,t))}{u(x,t)}\le \frac{C_0}{\tau-t},
\quad\hbox{for all $(x,t)\in \Omega\times [\tau/2,\tau)$ such that $u(x,t)\ge 1$,}
\ee
with $C_0=C_0(\Omega,f)>0$.
Also, there exist $\delta,A>0$ such that $f\ge 1$ on $[A,\infty)$ and 
\be{typeIa2}
\hbox{$F$ is a decreasing bijection from $[A,\infty)$ to $(0,\delta]$.}
\ee
We shall show that \eqref{typeIa} implies \eqref{typeI}.

To this end, we first claim  that there exist $k>1$ and $s_0>kA$ such that
\begin{equation}\label{A1708242}
 F\Bigl(\frac{s}{k}\Bigr)\ge\frac{C_0s}{f(s)}, \quad\hbox{for all $s\ge s_0$.}
	\end{equation}
	Indeed, by the representation theorem for slowly varying functions (see~\cite[Theorem 1.2]{seneta}),
	 there exist continuous functions $\tau,\xi$ such that
	$$f(s)=\tau(s)f_0(s),\quad\hbox{where } f_0(s):=s^p\exp\Big[\int_{1}^{s}\frac{\xi(z)}{z}dz\Big],\quad\hbox{for all $s\ge A$,}$$
	with $\lim_{s\to \infty}\tau(s)=\ell>0$ and $\lim_{s\to \infty}\xi(s)=0$.
	Moreover there exist $C_1,C_2>0$ such that $C_1\le \tau(s)\le C_2$ for all $s\ge A$.
	Fixing $1<m<p<q$, we see that there exists $s_1>A$ such that $s^{-m}f_0(s)$ 
	is increasing and $s^{-q}f_0$ is decreasing on $[s_1,\infty)$. For $z\ge s_1$, we have  
$$F(z)\ge C_2^{-1}\int_{z}^{\infty}\frac{ds}{f_0(s)}=C_2^{-1}\int_{z}^{\infty}s^{-q}\frac{s^q}{f_0(s)}ds
\ge C_2^{-1}\frac{z^q}{f_0(z)}\int_{z}^{\infty}s^{-q}ds=\frac{1}{C_2(q-1)}\frac{z}{f_0(z)}=:\frac{C_3z}{f_0(z)}.$$
	Let $k>1$. In view of this control of $F(z)$, we have, for all $s>ks_1$,
$$F(s/k)\ge \frac{C_3s}{kf_0(s/k)}=C_3 \Big(\frac{s}{k}\Big)^{1-m}\frac{\big(\frac{s}{k}\big)^m}{f_0(s/k)}
	\ge C_3 \Big(\frac{s}{k}\Big)^{1-m}\frac{s^m}{f_0(s)}=C_3k^{m-1}\frac{s}{f_0(s)}\ge C_3C_1k^{m-1}\frac{s}{f(s)}.$$
	Choosing $k>1$ large enough, so that $C_3C_1k^{m-1}\ge C_0$, and then $s_0= ks_1$, claim \eqref{A1708242} follows.

	 \smallskip
	 
Now set $t_0:=\max(\tau-\delta,\tau/2)$. For any $(x,t)\in \Omega\times[t_0,\tau)$ such that $u(x,t)\ge s_0$, it follows from \eqref{typeIa}, \eqref{A1708242} that
$$F\Bigl(\frac{u(x,t)}{k}\Bigr)\ge C\frac{u(x,t)}{f(u(x,t))}\ge \tau-t,$$
and \eqref{typeIa2} then implies $u(x,t)\le kF^{-1}(\tau-t)$.
Consequently, we have $u(x,t)\le s_0+kF^{-1}(\tau-t)$ in $ \Omega\times[t_0,\tau)$,
which implies the desired result.
\end{proof}

\noindent{\bf Acknowlegement.} The author thanks Prof. Philippe Souplet for helpful suggestions during the preparation of this work. 
	 \smallskip
	 
\noindent{\bf Statements and Declarations.} The author states that there is no conflict of interest. 
This manuscript has no associated data.

\vspace{.5cm}
\begin{center}
	\textsc{Universit\'e Sorbonne Paris Nord (ex Paris 13), Institut Galil\'ee, Laboratoire Analyse G\'eom\'etrie et Applications, 99 Avenue Jean-Baptiste Cl\'ement 93430 Villetaneuse, France.}
	\\
	{\it Email address:} chabi@math.univ-paris13.fr
\end{center}


\begin{thebibliography}{99}

 
\bibitem{BB}
C. Bandle and H. Brunner,
{\it Blowup in diffusion equations: a survey},
J. Comput. Appl. Math. 97 (1998), 3-22.


\bibitem{bingham1989regular}
N.H. Bingham, Ch.M. Goldie and  J.L. Teugels,
{\it Regular Variation, Encyclopedia of Mathematics and its Applications}, 27. 
Cambridge University Press, Cambridge, 1989.


\bibitem{BK94}
J. Bricmont and A. Kupiainen, 
{\it Universality in blow-up for nonlinear heat equations},
Nonlinearity 
7 (1994), 539-575.

\bibitem{chso}
L. Chabi  and Ph. Souplet,
{\it Refined behavior and structural universality of the blowup profile for the semilinear heat equation with general nonlinearity},
{ Math. Ann. 391 (2025), 4509-4554.}


\bibitem{cirstea}
F.C. Cirstea, 
{\it A complete classification of the isolated singularities for nonlinear elliptic equations with inverse square potentials},
Mem. Amer. Math. Soc.
227 (2014), vi+85 pp.

\bibitem{duong2018construction}
G.K. Duong, V.T. Nguyen  and H. Zaag,
{\it Construction of a stable blowup solution with a prescribed behavior for a non-scaling-invariant semilinear heat equation},
Tunisian Journal of Mathematics,
1 (2018), 13-45.


\bibitem{filippas1992refined}
S. Filippas  and R.V. Kohn,
{\it Refined asymptotics for the blowup of $u_t-\Delta u= u^p$},
Comm. Pure Appl. Math.
45 (1992), 821-869.

\bibitem{giga1989cha}
Y. Giga,
{\it A local characterization of blowup points of semilinear hear equations},
Nonlinear P. D. E. IV, M. Mimura and T. Nishida, eds., North Holland.

\bibitem{giga1985asymptotically}
 Y. Giga and R.V. Kohn,
{\it Asymptotically self-similar blow-up of semilinear heat equations},
Comm. Pure Appl. Math.
38 (1985), 297-319.


\bibitem{giga1989nondegeneracy}
Y. Giga  and R.V. Kohn,
{\it Nondegeneracy of blowup for semilinear heat equations},
Comm. Pure Appl. Math.
42 (1989), 845-884.

\bibitem{GU}
Y. Giga and N. Umeda,
{\it On blow-up at space infinity for semilinear heat equations}, 
J. Math. Anal. Appl. 
316 (2006), 538-555.


\bibitem{hamza2022blow}
M. A. Hamza  and H. Zaag,
{\it The blow-up rate for a non-scaling invariant semilinear heat equation}, Archive for Rational Mechanics and Analysis,
1 (2022), 87-125

\bibitem{herrero1992blow}
 M.A. Herrero and J.J.L. Vel{\'a}zquez,
{\it Blow-up profiles in one-dimensional semilinear parabolic problems},
Commun. Partial Differ. Equations,
17 (1992), 205-219.



\bibitem{HV93}
M.A. Herrero  and J.J.L. Vel{\'a}zquez,
{\it Blow-up behaviour of one-dimensional semilinear parabolic equations},
Ann. Inst. H. Poincar\'e Anal. Non Lin\'eaire
10 (1993), 131-189.


\bibitem{merle1998optimal}
F. Merle  and H. Zaag,
{\it Optimal estimates for blowup rate and behavior for nonlinear heat equations},
Comm. Pure Appl. Math. 51 (1998), 139-196.

\bibitem{merle1998refined}
F. Merle  and H. Zaag,
{\it Refined uniform estimates at blow-up and applications for nonlinear heat equations},
Geom. Funct. Analysis
8 (1998), 1043-1085.


\bibitem{quittner2019superlinear}
P. Quittner  and  Ph. Souplet,
Superlinear parabolic problems. Blow-up, global existence and steady states. 
Second Edition. Birkh\"auser Advanced Texts, 2019.

\bibitem{seneta}
E. Seneta, 
Regularly Varying Functions, Lecture Notes in Math., Vol. 508, Springer-Verlag,
Berlin, New York, 1976.

\bibitem{souplet2019simplified}
Ph. Souplet,
{\it A simplified approach to the refined blowup behavior for the nonlinear heat equation},
SIAM J. Math. Analysis 
51 (2019), 991-1013.

\bibitem{souplet2022universal}
 Ph. Souplet,
{\it Universal estimates and Liouville theorems for superlinear problems without scale invariance},
Discrete Cont. Dynam. Syst. A 
43 (2023), 1702-1734.

\bibitem{Vel92}
J.J.L. Vel\'azquez,
{\it Higher dimensional blow up for semilinear parabolic equations},
Comm. Partial Differential Equations
17 (1992), 1567-1596.

\bibitem{Vel93b}
J.J.L. Vel\'azquez,
{\it Classification of singularities for blowing up solutions in higher dimensions},
Trans. Amer. Math. Soc.
338 (1993), 441-464


\end{thebibliography}
\end{document}